\def\cal{\mathcal}
\newtheorem{thm}{Theorem}[section]
\newtheorem{proposition}{Proposition}[section]
\newtheorem{lemma}{Lemma}[section]
\newtheorem{corollary}{Corollary}[section]
\theoremstyle{definition}
\newtheorem{definition}{Definition}[section]
\newtheorem{remark}{Remark}
\newtheorem{hypothesis}{Hypothesis}
\def\R{{\mathbb{R}}}
\newcommand{\cA}{{\mathcal A}}
\newcommand{\cY}{{\mathcal Y}}
\def\ds{\displaystyle}
\def\ve{\varepsilon}
\title{Boundary controllability for degenerate/singular hyperbolic equations in nondivergence form with drift}
\author{
{\sc Genni Fragnelli$^a$, Dimitri Mugnai$^a$, Amine Sbai$^{b,c}$}\\\\
$^a$Dipartimento di Scienze Ecologiche e Biologiche,\\
Università della Tuscia,\\
Largo dell’Università, 01100 Viterbo, Italy,\\
email: (dimitri.mugnai, genni.fragnelli)@unitus.it\\\\
$^b$Hassan first University of Settat,\\
Faculty of Sciences and Technology, MISI Laboratory,\\
B.P. 577, Settat 26000, Morocco\\\\
$^c$Department of Applied Mathematics,\\
University of Granada, Granada, Spain.\\
email: a.sbai@uhp.ac.ma\\
}
\date{}
\begin{document}

\maketitle

\begin{abstract}
We study the null controllability for a degenerate/singular wave equation with drift in non divergence form. In particular, considering a control localized on the non degenerate boundary point,  we provide some conditions for the boundary controllability via energy methods and boundary observability.
\end{abstract}

Keywords: Boundary controllability, degenerate hyperbolic equations, drift term, singular potentials.

MSC 2020: 35L10, 35L80, 93B05, 93B07, 93D15.

\section{Introduction}
The aim of this paper is to study the null controllability of a degenerate hyperbolic equation with drift in presence of a singular term, with singularity at the same point where the leading coefficient degenerates.
To be more precise, we consider the following control problem:
\begin{align}\label{mainequation}
\begin{cases}u_{t t}-a(x) u_{x x}-\ds\frac{\lambda}{d(x)}u -b(x) u_x=0, & (t, x) \in Q_T, \\ u(t, 0)=0, \quad u(t, 1)=f(t), & t>0, \\ u(0, x)=u_0(x), \quad u_t(0, x)=u_1(x), & x \in(0,1),\end{cases}
\end{align}
where $Q_T=(0, T) \times(0,1)$, $\lambda \in \R$ and $u_0$, $u_1$ are the initial values. The control function $f$, which is used to steer the solution to its equilibrium state $0$ at a sufficiently large time $T>0$, acts on the non degenerate and non singular boundary point. The interest in this kind of equation comes from the
 standard linear theory for transverse waves in a string of length $L$  under tension $\mathcal T$. Actually, if $u(t,x)$ denotes the vertical displacement of the string from the $x$ axis at position $x\in (0,L)$ and time $t>0$, then the classical wave equation can be rewritten as
\[
\frac{\partial^2 u}{\partial t^2}(t,x)=a(x)\frac{\partial^2 u(t,x)}{\partial x^2}+b(x)\frac{\partial u}{\partial x}(t,x),
\]
where  $a(x):=\mathcal T(x)\rho^{-1}(x)$, $b(x):=\mathcal T'(x)\rho^{-1}(x)$. Here
$\rho(x)$ is the mass density of the string at position $x$, while $\mathcal T(x)$ denotes the tension in the string at position $x$ and time $t$. 
If the density is extremely large at some point, say $x=0$, then the previous equation {\sl degenerates} at $x=0$, in the sense that we can consider $a(0)=0$, while the remainder term is a drift one.
For this reason in \cite{BFM2022} the null controllability for \eqref{mainequation} with $\lambda =0$ is considered. For the same problem in divergence form we refer to \cite{gu} for the prototype case ($a(x)=x^\alpha,\; \alpha \in (0,1)$) and to \cite{alabau}) for a general function $a$.

Let us recall that  null controllability for the one dimensional {\it nondegenerate} wave equation can be attacked in several ways: for instance,  consider
\begin{equation}\label{classica}
\begin{cases}
u_{tt} - u_{xx}=f_\omega(t,x), & (t,x) \in Q_T,
\\
u(t, 0)= 0, \quad u(t,1)=f(t), & t \in (0,T),
\\
u(0,x)= u_0(x)\in H^1_0(0,1), \quad u_t(0,x)=u_1(x)\in L^2(0,1),& x \in (0, 1).
\end{cases}
\end{equation}
Here $u$ is the state, while $f_\omega$ and $f$ are the controls: one may have $f=0$ and $f_\omega$ acting as a control localized in the subset $\omega$ of $[0,1]$, or $f_\omega=0$ and $f$ acting as a boundary control. In any case, one looks for conditions in order to drive the solution to equilibrium at a given time $T$, i.e. given the initial data $(u_0, u_1)$ in a suitable space, we look for a control ($f$ or $f_\omega$) such that
\begin{equation}\label{NC1}
u(T,x)=u_t(T,x)=0, \quad \text{ for all } x \in (0,1). 
\end{equation}
Clearly, due to the finite speed of propagation of solutions of the wave equation, we cannot expect to have null controllability at any final time $T$ (as in the parabolic case), but we need $T$ to be sufficiently large: for equation \eqref{classica} it is well known that null controllability holds if $T>2$, see \cite[Chapter 4]{Russell}. Moreover, the Hilbert Uniqueness Method (HUM) permits to characterize such a control in terms of minimum of a certain functional. A related approach 
for \eqref{classica} with $f=0$ is showed in \cite{zuazua1}.
We also mention the recent paper \cite{s}, where the author studies \eqref{classica} with two linearly moving endpoints, establishing observability results in a sharp time and deriving exact boundary controllability results.

However, in
recent years  great attention is given to controllability issues for parabolic problems not only with a degenerate terms, but also with a singular term. Indeed,  many problems
coming from Physics and Biology (see \cite{AHSS2022}, \cite{KaZo}, \cite{vz}), Biology (see \cite{bf},  \cite{bfm}, \cite{epma}, \cite{f2018},\cite{f2020} and \cite{fy}) or Mathematical Finance
(see \cite{HW}) are described by degenerate parabolic equations with a singular term.
A common strategy in showing controllability is to prove global Carleman estimates  for
the operator which is the adjoint of the given one.
In this framework, new Carleman estimates and null controllability properties have been established in \cite{acf}, \cite{cmv2005}, \cite{cmv0}, \cite{fm2013} and \cite{mv} for regular degenerate coefficients, in \cite{bfm}, \cite{fm2016} and \cite{corrigendum} for non smooth degenerate coefficients and in \cite{fs}, \cite{fs1},  \cite{f2016}, \cite{fm2017}, \cite{fm2018}, \cite{fm2020} and \cite{v} for degenerate and singular coefficients.

Null controllability for wave equations with degeneracy and singularity in presence of pure powers and in divergence form has been recently tacled in \cite{ams}. 
As far as we know, this is the first paper to consider a {\sl degenerate} hyperbolic equation in {\sl non divergence form} with drift, where both the degeneracy and the singularity are described by more general functions. Clearly the presence of the drift term and of the singular term leads to use different spaces with respect to the ones in \cite{alabau}, in \cite{BFM2022} or  in \cite{ams} and give rise to some new difficulties.
As a consequence, the tools used in those papers cannot be simply adapted and a different functional setting is needed, together with suitable assumptions on $b$.
For this, following \cite{alabau} or \cite{BFM2022}, we consider two types of degeneracy for $a$ and $b$, which we introduce for a general function $g$.
\begin{definition}
A function $g:[0,1]\to \R$ is weakly degenerate at 0, (WD) for short, if $g$ $\in$ $C^0[0,1] \cap C^1(0,1]$ is such that $g(0)=0, g>0$ on $(0,1]$ and, if
\begin{align}\label{WD}
\sup _{x \in(0,1]} \frac{x\left|g^{\prime}(x)\right|}{g(x)}:=K_g,
\end{align}
then $K_g\in(0,1)$.
\end{definition}

\begin{definition}
 A function $g:[0,1]\to \R$ is strongly degenerate at 0, (SD) for short, if $g$ $\in$ $C^1[0,1]$ is such that $g(0)=0, g>0$ on $(0,1]$ and in \eqref{WD} we have $K_g \in[1,2)$.
\end{definition}

Roughly speaking, when $g(x)\sim x^K$, it is (WD) if $K\in (0,1)$ and (SD) if $K\in[1,2)$. Notice that the case $K\geq 2$ is not considered, since controllability cannot hold, not even when  $\lambda=0$, see \cite{BFM2022}.

\medskip

The paper is organized as follows: in Section \ref{section2} we study  the well posedness of the associated problem with Dirichlet boundary conditions, which we need to study the adjoint problem of \eqref{mainequation}; in Section \ref{section3} we consider such an adjoint problem  and we prove that for this kind of problem the associated (weighted) energy is constant in time. Moreover, we prove some estimates of the energy and, thanks to them, we  will prove in Section \ref{section4} boundary observabilities and null controllability for \eqref{mainequation}. The paper ends with the Appendix where we present some technical results, which we postpone for the readers' convenience.

\section{Well posedness for the problem with homogeneous Dirichlet boundary conditions}\label{section2}
Consider the degenerate/singular hyperbolic problem with Dirichlet boundary conditions
\begin{align}\label{homogequation}
\begin{cases}y_{t t}-a y_{x x}-\ds \frac{\lambda}{d}y-b y_x=0, & (t, x) \in(0,+\infty) \times(0,1) \\ y(t, 1)=y(t, 0)=0, & t \in(0,+\infty) \\ y(0, x)=y_T^0(x), & x \in(0,1) \\ y_t(0, x)=y_T^1(x), & x \in(0,1),\end{cases}
\end{align}
where the functional setting and the motivation for the notation of the initial data will be made clear below.

We assume the following hypothesis.
\begin{hypothesis}\label{hyp1}
The functions $a, b, d \in C^0[0,1]$ are such that 
\begin{enumerate}
\item $\ds \frac{b}{a} \in L^1(0,1)$,
\item $a(0)=d(0)=0$, $a,d>0$ on $(0,1]$,
\item there exist $K_1, K_2 \in(0,2)$ such that $K_1+ K_2 \le 2$ and  the functions
\begin{equation}\label{ipoa}
x \longmapsto \frac{x^{K_1}}{a(x)}
\end{equation}
and
\begin{equation}\label{ipod}
x \longmapsto \frac{x^{K_2}}{d(x)}
\end{equation}
are nondecreasing in a right neighborhood of $x=0$.
\end{enumerate}
\end{hypothesis}
It is clear that, if Hypothesis \ref{hyp1} holds, then
\begin{align}\label{limitxgamma/a}
\lim _{x \rightarrow 0} \frac{x^\gamma}{a(x)}=0
\end{align}
for all $\gamma>K_1$ and
\begin{align}\label{limitxgamma/da}
\lim _{x \rightarrow 0} \frac{x^\gamma}{d(x)}=0
\end{align}
for all $\gamma>K_2$.

Let us remark that if $a$ is (WD) or (SD), then \eqref{WD} implies that the function
\begin{align}\label{nondecreasxg/a}
x \mapsto \frac{x^\gamma}{a(x)} \text{ is nondecreasing in } (0,1] \text{ for all } \gamma \geq K_a.
\end{align}
In particular, the monotonicity condition of the map in \eqref{ipoa} holds  and
\begin{align}\label{boundxgamba}
\left|\frac{x^\gamma b(x)}{a(x)}\right| \leq \frac{1}{a(1)}\|b\|_{L^{\infty}(0,1)}
\end{align}
for all $\gamma \geq K_a$. For further purposes, let us introduce
\begin{align}\label{M}
M:=\frac{\|b\|_{L^{\infty}(0,1)}}{a(1)}, 
\end{align}
noticing that if $a$ and $b$ are both (WD) or (SD), Hypothesis \ref{hyp1} reduces to require $\frac{b}{a}\in L^1(0,1)$ and $K_a+K_d\leq2$.

Let us also remark that for the prototypes $a(x)=x^K$ and $b(x)=x^h$ with $K \in(0,1]$ and $h \geq 0$, the condition $\frac{b}{a} \in L^1(0,1)$ is clearly satisfied when $h>K-1$.
\medskip

In order to study the well-posedness of \eqref{homogequation}, we consider as in \cite{BFM2022} the well-known absolutely continuous weight function
\[
\eta(x):=\exp\left \{\int_{\frac{1}{2}}^x\frac{b(s)}{a(s)}ds\right \}
, \quad x\in [0,1],
\]
introduced by Feller  several years ago. Since $\ds \frac{b}{a}\in L^1(0,1)$, we find that $\eta\in C^0[0,1]\cap
C^1(0,1]$ is a strictly positive function. When $b$ degenerates at 0 not slower than $a$, for instance if $a(x)=x^K$ and $b(x)=x^h$,  $K\leq h$, then $\eta$ can be extended to a function of class $C^1[0,1]$. Now set
\[
\sigma(x):=a(x)\eta^{-1}(x),
\]
and introduce, as in \cite{cfr},
the following Hilbert spaces with the related inner products and norms
\[
 L^2_{\frac{1}{\sigma}}(0,1) :=\left\{ u \in L^2(0,1)\; \big|\; \|u\|_{ \frac{1}{\sigma}}<\infty \right\},
 \;  \langle u,v\rangle_{\frac{1}{\sigma}}:= \int_0^1u v\frac{1}{\sigma}dx, \; \|u\|^2_{\frac{1}{\sigma}} = \int_0^1 \frac{u^2}{\sigma}dx
\]
and
\[
H^1_{\frac{1}{\sigma}}(0,1) :=L^2_{\frac{1}{\sigma}}(0,1)\cap
H^1_0(0,1),
\]\[
\;  \langle u,v\rangle_{1,\frac{1}{\sigma}} :=   \langle u,v\rangle_{\frac{1}{\sigma}} + \int_0^1 \eta u'v'dx, \; \|u\|^2_{1,\frac{1}{\sigma}} = \|u\|^2_{\frac{1}{\sigma}} + \int_0^1\eta (u')^2 dx.
\]
Notice that the presence of the weight $\eta$ in the integrals above is non-essential, but it is useful to have easier calculations in some steps below.

The following result holds:
\begin{proposition}\label{propL2}
Assume Hypothesis $\ref{hyp1}$. If $u \in H^1_{\frac{1}{\sigma}}(0,1) $, then $ \ds\frac{u}{\sqrt{\sigma d}} \in L^2(0,1)$ and
there exists a positive constant $C>0$ such that
\begin{equation}\label{stima1CHP}
\int_0^1 \frac{u^2}{\sigma d }dx \le C \int_0^1\eta(u')^2dx.
\end{equation}
\end{proposition}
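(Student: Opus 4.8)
The plan is to reduce \eqref{stima1CHP} to a classical one-dimensional Hardy inequality after absorbing the weight $\eta$ into constants. First I would rewrite the left-hand side: since $\sigma=a\eta^{-1}$, one has $\frac{1}{\sigma d}=\frac{\eta}{ad}$, so that
\[
\int_0^1\frac{u^2}{\sigma d}\,dx=\int_0^1\frac{\eta\,u^2}{ad}\,dx .
\]
Because $\eta\in C^0[0,1]$ is strictly positive, there are constants $0<\eta_{\min}\le\eta_{\max}<\infty$ with $\eta_{\min}\le\eta\le\eta_{\max}$ on $[0,1]$; hence it suffices to prove the unweighted estimate $\int_0^1\frac{u^2}{ad}\,dx\le C_0\int_0^1(u')^2\,dx$ and then restore $\eta$ on both sides, at the cost of the factor $\eta_{\max}/\eta_{\min}$.

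Next I would control the singular coefficient near the degeneracy point. By part 3 of Hypothesis \ref{hyp1}, the maps in \eqref{ipoa} and \eqref{ipod} are nondecreasing on some interval $(0,x_0]$ with $x_0\le 1$; comparing their values at $x$ and at $x_0$ gives $\frac{1}{a(x)}\le \frac{x_0^{K_1}}{a(x_0)}\,x^{-K_1}$ and $\frac{1}{d(x)}\le \frac{x_0^{K_2}}{d(x_0)}\,x^{-K_2}$, so that $\frac{1}{a(x)d(x)}\le C_1\,x^{-(K_1+K_2)}$ on $(0,x_0]$. Since $K_1+K_2\le 2$ and $x\le 1$ force $x^{K_1+K_2}\ge x^2$, this yields $\frac{1}{ad}\le C_1\,x^{-2}$ near $0$. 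On the complementary interval $[x_0,1]$ the product $ad$ is continuous and strictly positive, hence bounded below, and using $\|u\|_{L^\infty(0,1)}^2\le\int_0^1(u')^2\,dx$ (which follows from $u(0)=0$ and Cauchy--Schwarz) one bounds $\int_{x_0}^1\frac{u^2}{ad}\,dx$ directly by $C\int_0^1(u')^2\,dx$.

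It then remains to handle the genuinely singular piece $\int_0^{x_0}\frac{u^2}{x^2}\,dx$ through the classical Hardy inequality $\int_0^1\frac{u^2}{x^2}\,dx\le 4\int_0^1(u')^2\,dx$, valid for $u\in H^1_0(0,1)$. I expect this to be the main obstacle — not the inequality itself, for integrating by parts via $\frac{1}{x^2}=-\left(\frac1x\right)'$ and applying Cauchy--Schwarz gives $A\le 2\sqrt{AB}$ with $A=\int_0^1 u^2/x^2$ and $B=\int_0^1(u')^2$, hence $A\le 4B$ — but because this computation presupposes the \emph{finiteness} of $A$, which the mere membership $u\in H^1_{\frac{1}{\sigma}}(0,1)\subset H^1_0(0,1)$ does not grant a priori (the pointwise bound $u(x)^2\le x\,\|u'\|_{L^2}^2$ only gives $u^2/x^2\le \|u'\|_{L^2}^2/x$, which is not integrable at $0$). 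I would resolve this by first establishing the estimate for $u\in C_c^\infty(0,1)$, where all integrals converge, and then passing to general $u\in H^1_0(0,1)$ by density: taking $u_n\to u$ in $H^1_0(0,1)$ with $u_n\in C_c^\infty(0,1)$, Fatou's lemma applied to $\int_0^1\frac{u_n^2}{x^2}\,dx$ delivers at once the integrability of $\frac{u^2}{x^2}$ and the inequality in the limit. Collecting the two ranges and restoring the $\eta$-weights yields \eqref{stima1CHP} with $C=\frac{\eta_{\max}}{\eta_{\min}}\,C_0$, the membership $\frac{u}{\sqrt{\sigma d}}\in L^2(0,1)$ being precisely the finiteness of the left-hand side.
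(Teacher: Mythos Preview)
Your argument is correct and follows essentially the same route as the paper: rewrite $\frac{1}{\sigma d}=\frac{\eta}{ad}$, absorb $\eta$ into constants, use the monotonicity in Hypothesis~\ref{hyp1} to bound $\frac{1}{ad}$ by $C x^{-(K_1+K_2)}\le C x^{-2}$, and conclude via the classical Hardy inequality. The paper compresses all of this into a single chain of inequalities and invokes Hardy's inequality for $H^1_0$ functions without further comment; your version is more careful in two respects---you split the interval to accommodate that the monotonicity is only assumed near $0$, and you address the a~priori finiteness of $\int_0^1 u^2/x^2\,dx$ by a density-plus-Fatou argument---but these are elaborations of the same proof rather than a different strategy.
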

\begin{proof}
Taking $u \in H^1_{\frac{1}{\sigma}}(0,1)$, recalling the monotonicity condition in Hypothesis \ref{hyp1}, by Hardy's inequality one has
\[
\begin{aligned}
\int_0^1 \frac{u^2}{\sigma d }dx &\le \max_{[0,1]}\eta \int_0^1\frac{u^2}{ ad} dx \le  C\int_0^1\frac{u^2}{ x^{K_1+K_2}} dx \\
&\le  C\int_0^1\frac{u^2}{x^2} dx \le  \tilde C \int_0^1\eta (u')^2dx
\end{aligned}
\]
for some $C,\tilde C>0$.
\end{proof}
Let
\begin{equation}\label{CHP}
C_{HP}  \text{ be the best constant of \eqref{stima1CHP}},
\end{equation}
and let us remark that, if the monotonicity conditions in Hypothesis \ref{hyp1} are global, for instance if $a$ and $d$ are (WD) or (SD), then the estimate in the proof of Proposition \ref{propL2} shows that
\[
C_{HP}\leq \frac{4\max_{[0,1]}\eta}{a(1)d(1)\min_{[0,1]}\eta}.
\]

Another assumption we will use is the next one:
\begin{hypothesis}\label{hyp2}
The constant $\lambda \in \R$ is such that
\begin{equation}\label{lambda}
\lambda < \frac{1}{C_{HP}}.
\end{equation}
\end{hypothesis}

Under Hypotheses \ref{hyp1} and \ref{hyp2}, one can  consider on
$
H^1_{\frac{1}{\sigma}}(0,1) $ 
also the inner product
\[
\;  \langle u,v\rangle_1 :=   \langle u,v\rangle_{\frac{1}{\sigma}} + \int_0^1 \eta u'v'dx -\lambda \int_0^1 \frac{uv}{\sigma d}dx,
\]
which induces the norm
\[
\|u\|_1^2:= \|u\|^2_{\frac{1}{\sigma}} + \int_0^1\eta (u')^2 dx-\lambda \int_0^1 \frac{u^2}{\sigma d}dx.\]
For all $u \in H^1_{\frac{1}{\sigma}}(0,1)$, consider also the two norms
\[
\|u\|_{1, \circ}^2:= \int_0^1 (u')^2dx \quad \text{ and } \quad \|u\|_{1, \bullet}^2:= \int_0^1\eta (u')^2dx.
\]
By Proposition \ref{propL2} and the classical Poincar\'e inequality, it is straightforward to prove the equivalence below.
\begin{corollary}
\label{equivalenze}Assume Hypotheses $\ref{hyp1}$ and $\ref{hyp2}$. Then the norms
$
\|\cdot\|_{1,\frac{1}{\sigma}}^2$, $\|\cdot\|_1^2$,
$
\|\cdot\|_{1, \circ}^2$ and $\|\cdot\|_{1, \bullet}^2$
are equivalent in  $H^1_{\frac{1}{\sigma}}(0,1)$. 
\end{corollary}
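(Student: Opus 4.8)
The plan is to prove the four equivalences through a short chain of comparisons and then invoke transitivity of norm equivalence. Throughout I would use that $\eta\in C^0[0,1]$ is strictly positive, so that $0<\min_{[0,1]}\eta\le \eta(x)\le \max_{[0,1]}\eta<\infty$, together with Proposition \ref{propL2} and Hypothesis \ref{hyp2}. First I would link $\|\cdot\|_{1,\circ}$ and $\|\cdot\|_{1,\bullet}$: these differ only by the weight $\eta$ inside the Dirichlet integral, so the pointwise bounds on $\eta$ give directly $\min_{[0,1]}\eta\,\|u\|_{1,\circ}^2\le \|u\|_{1,\bullet}^2\le \max_{[0,1]}\eta\,\|u\|_{1,\circ}^2$. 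The classical Poincaré inequality enters here to guarantee that $\|\cdot\|_{1,\circ}$ is a bona fide norm on $H^1_{\frac1\sigma}(0,1)\subset H^1_0(0,1)$, equivalent to the full $H^1_0$-norm, which anchors the chain.

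Next I would compare $\|\cdot\|_{1,\bullet}$ with $\|\cdot\|_{1,\frac1\sigma}^2=\|\cdot\|_{\frac1\sigma}^2+\|\cdot\|_{1,\bullet}^2$. One inequality is immediate, since $\|u\|_{1,\bullet}^2\le \|u\|_{1,\frac1\sigma}^2$. For the reverse one I must control the zeroth-order term $\|u\|_{\frac1\sigma}^2$. Writing $\frac1\sigma=\frac{d}{\sigma d}$ and using that $d$ is continuous, hence bounded, on $[0,1]$, Proposition \ref{propL2} yields $\int_0^1\frac{u^2}{\sigma}\,dx\le \|d\|_{L^\infty(0,1)}\int_0^1\frac{u^2}{\sigma d}\,dx\le \|d\|_{L^\infty(0,1)}\,C_{HP}\int_0^1\eta(u')^2\,dx$, so that $\|u\|_{1,\frac1\sigma}^2\le \big(1+\|d\|_{L^\infty(0,1)}C_{HP}\big)\|u\|_{1,\bullet}^2$.

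The last and most delicate link is between $\|\cdot\|_1$ and $\|\cdot\|_{1,\frac1\sigma}$, which differ exactly by the singular term $-\lambda\int_0^1\frac{u^2}{\sigma d}\,dx$; this is where Hypothesis \ref{hyp2} is indispensable and constitutes the main obstacle. If $\lambda\le 0$ both estimates are elementary, the singular term having a favourable sign in one direction and being bounded by $|\lambda|C_{HP}\|u\|_{1,\bullet}^2$ in the other. If instead $0<\lambda<1/C_{HP}$, the upper bound $\|u\|_1^2\le \|u\|_{1,\frac1\sigma}^2$ is trivial, whereas the lower bound requires Proposition \ref{propL2} in the sharp form: $\lambda\int_0^1\frac{u^2}{\sigma d}\,dx\le \lambda C_{HP}\int_0^1\eta(u')^2\,dx$, which gives $\|u\|_1^2\ge \|u\|_{\frac1\sigma}^2+(1-\lambda C_{HP})\int_0^1\eta(u')^2\,dx\ge \min\{1,\,1-\lambda C_{HP}\}\,\|u\|_{1,\frac1\sigma}^2$. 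The decisive point is that the coefficient $1-\lambda C_{HP}$ is strictly positive precisely by condition \eqref{lambda}; without it $\|\cdot\|_1$ could fail to be coercive and would not even be a norm.

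Combining the three links by transitivity then yields the mutual equivalence of $\|\cdot\|_{1,\frac1\sigma}^2$, $\|\cdot\|_1^2$, $\|\cdot\|_{1,\circ}^2$ and $\|\cdot\|_{1,\bullet}^2$ on $H^1_{\frac1\sigma}(0,1)$, as claimed. I expect the only genuinely subtle step to be the lower bound in the case $\lambda>0$, where the best-constant property of $C_{HP}$ and the strict inequality in Hypothesis \ref{hyp2} must be used together; all remaining estimates are routine consequences of the boundedness of $\eta$ and $d$ and of Proposition \ref{propL2}.
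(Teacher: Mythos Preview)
Your proposal is correct and follows precisely the route the paper indicates: the authors do not spell out a proof, stating only that the equivalences follow from Proposition~\ref{propL2} and the classical Poincar\'e inequality, and your chain of comparisons (boundedness of $\eta$ for $\|\cdot\|_{1,\circ}\sim\|\cdot\|_{1,\bullet}$, Proposition~\ref{propL2} together with boundedness of $d$ for the zeroth-order term, and the strict inequality $\lambda<1/C_{HP}$ for the coercivity of $\|\cdot\|_1$) is exactly the intended argument made explicit.
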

Moreover, from \eqref{stima1CHP}, one can also deduce that there exists
 $C>0$ such that
 \begin{equation}\label{hpmon}
\int_0^1 \frac{v^2}{\sigma} dx
 \le C \int_0^1(v')^2dx\quad \forall \; v\in H^1_{\frac{1}{\sigma}}(0,1).
 \end{equation}
 Thus the
 spaces $H^1_0(0,1)$ and $H^1_{\frac{1}{\sigma}}(0,1)$ algebraically coincide. 
 
Now define the operator
\begin{equation}\label{defsigma}
Ay:=ay_{xx}+by_x = \sigma(\eta y_x)_x,
\end{equation}
for all $y \in D(A)$, where $D(A)$ is the Hilbert space
\[
H^2_{\frac{1}{\sigma}}(0,1) := \Big\{ u \in
H^1_{\frac{1}{\sigma}}(0,1)\; \big|\;Au \in
L^2_{\frac{1}{\sigma}}(0,1)\Big\},
\]
with inner products
\[
 \langle u,v\rangle_{2, \frac{1}{\sigma}} := \langle u,v\rangle_{1, \frac{1}{\sigma}}+  \langle Au,Av\rangle_{\frac{1}{\sigma}}\]
 or
\[
 \langle u,v\rangle_{2} := \langle u,v\rangle_1+  \langle Au,Av\rangle_{\frac{1}{\sigma}}.\]

The following integration by parts holds.
\begin{lemma}[see \cite{bfm}, Lemma 2.1]\label{Lemma2.1}
Assume Hypothesis $\ref{hyp1}$. If 
$u\in H^2_{{\frac{1}{\sigma}}}(0,1)$ and $v\in H^1_{{\frac{1}{\sigma}}}(0,1)$, then
\begin{equation}\label{Green}
 \langle Au,v\rangle_{\frac{1}{\sigma}}=-\int_0^1\eta u'v'dx.
\end{equation}
\end{lemma}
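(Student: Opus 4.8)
The plan is to exploit the divergence structure recorded in \eqref{defsigma}, namely $Au=\sigma(\eta u_x)_x$, which is exactly what makes the weight $\frac1\sigma$ in the inner product cancel. Writing out the definition,
\[
\langle Au,v\rangle_{\frac1\sigma}=\int_0^1 \sigma(\eta u_x)_x\,v\,\frac1\sigma\,dx=\int_0^1 (\eta u_x)_x\,v\,dx,
\]
so the claim reduces to a plain integration by parts for the function $\eta u_x$, the only delicate point being the behaviour of the boundary term at the degenerate endpoint $x=0$.

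First I would integrate by parts on a truncated interval $(\varepsilon,1)$, obtaining
\[
\int_\varepsilon^1 (\eta u_x)_x v\,dx=\big[\eta u_x v\big]_\varepsilon^1-\int_\varepsilon^1 \eta u_x v_x\,dx.
\]
The contribution at $x=1$ drops because $v\in H^1_{\frac1\sigma}(0,1)\subset H^1_0(0,1)$ forces $v(1)=0$. Moreover $\int_\varepsilon^1 \eta u_x v_x\,dx$ converges as $\varepsilon\to0$, since by Cauchy--Schwarz it is controlled by $\|u\|_{1,\bullet}\|v\|_{1,\bullet}$, both factors being finite; likewise $\int_\varepsilon^1 (\eta u_x)_x v\,dx=\int_\varepsilon^1 \frac{Au}{\sqrt\sigma}\,\frac{v}{\sqrt\sigma}\,dx\to\langle Au,v\rangle_{\frac1\sigma}$, which is finite because $Au\in L^2_{\frac1\sigma}(0,1)$. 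Consequently the limit $L:=\lim_{\varepsilon\to0^+}\eta(\varepsilon)u_x(\varepsilon)v(\varepsilon)$ exists, being the difference of two convergent quantities.

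The main obstacle is to show $L=0$. Here I would combine two facts: on one hand, $v\in H^1_0(0,1)$ with $v(0)=0$ gives, via Cauchy--Schwarz, $|v(x)|\le \sqrt{x}\big(\int_0^x (v')^2\,ds\big)^{1/2}$, whence $v^2(x)=o(x)$ as $x\to0$; on the other hand $\int_0^1\eta(u_x)^2\,dx<\infty$ while $\eta$ is bounded on $[0,1]$. If $L\neq0$ then $|\eta u_x|\gtrsim 1/|v|$ on a right neighbourhood of $0$, so
\[
\eta(u_x)^2=\frac{(\eta u_x)^2}{\eta}\gtrsim \frac{1}{v^2}\gtrsim \frac1x,
\]
using $v^2=o(x)$ and the boundedness of $\eta$; this would force $\int_0^1\eta(u_x)^2\,dx=+\infty$, a contradiction. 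Hence $L=0$, the boundary term at the degenerate point vanishes, and letting $\varepsilon\to0$ yields \eqref{Green}.
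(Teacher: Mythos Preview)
Your proof is correct. The paper does not prove this lemma directly (it is cited from \cite{bfm}), but the crucial boundary limit $\lim_{x\to 0}\eta(x)u_x(x)v(x)=0$ is precisely item~1 of Lemma~\ref{lemmalimits}, proved in the Appendix. There the existence of the limit is obtained by showing $z:=\eta u_x v\in W^{1,1}(0,1)$ (computing $z'$ and checking each piece is integrable), while you deduce existence more economically from the separate convergence of $\int_\varepsilon^1(\eta u_x)_x v\,dx$ and $\int_\varepsilon^1\eta u_x v_x\,dx$; the contradiction argument for $L=0$ is then essentially the same in both approaches (the Appendix concludes $|u_x|\gtrsim x^{-1/2}\notin L^2$, you conclude $\eta u_x^2\gtrsim x^{-1}\notin L^1$, which is the same estimate). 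Your route has the mild advantage of avoiding the verification that $(\eta u_x)'v\in L^1(0,1)$ as a separate step, folding it into the convergence of the weighted inner product.
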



In order to study the well posedness of problem \eqref{homogequation}, we introduce the operator 
\[
A_\lambda y:= Ay + \frac{\lambda}{d}y, \quad \forall \; y \in D(A_\lambda),
\]
where
\begin{equation}\label{dal}
D(A_\lambda):=\left\{u \in
H^1_{\frac{1}{\sigma}}(0,1)\; \big|\;A_\lambda u \in
L^2_{\frac{1}{\sigma}}(0,1)\right\}.
\end{equation}

\begin{remark}\label{remdomini}
Notice that if $u\in H^1_{\frac{1}{\sigma}}(0,1)$ and $K_a+2K_d\leq 2$, reasoning as in the proof of Proposition \ref{propL2}, we immediately find that $\frac{u}{d}\in L^2_{\frac{1}{\sigma}}(0,1)$, and so $u \in D(A_\lambda)$ if and only if $u\in H^2_{\frac{1}{\sigma}}(0,1)$, that is $D(A_\lambda)=H^2_{\frac{1}{\sigma}}(0,1)$. 

If  $K_a+2K_d> 2$, the definition of $D(A_\lambda)$ is not sufficient to ensure that its members are smooth enough to justify the integration by parts given in \eqref{Green}: indeed, if $u\in D(A_\lambda)$, the request that $\sigma(\eta u')'+\lambda \frac{u}{d}\in L^2_{\frac{1}{\sigma}}$ doesn't permit to take $u$ in $H^2_{\frac{1}{\sigma}}$, since $\frac{u^2}{d^2\sigma}\sim \frac{u^2}{x^{2K_d+K_a}}$ near 0, and so, in general,  $\frac{u^2}{d^2\sigma}\not\in L^1(0,1)$.
\end{remark}

In view of the previous remark, we assume the following condition, which will assume also for the final controllability result, see Theorem \ref{thmNC}.
\begin{hypothesis}\label{hypvera}
Hypothesis \ref{hyp1} holds with $K_1+2K_2\leq 2$.
\end{hypothesis}
\begin{remark}
We notice that Hypothesis \ref{hypvera} excludes the case of $d$ being (SD). On the other hand, $a$ can be (SD), but in such a case $d$ must be (WD) with small $K_d$.
\end{remark}

Now, let us consider the matrix operator $\cA : D(\cA) \subset \mathcal H_0 \rightarrow \mathcal H_0$ given by
\begin{equation}\label{Acorsivo}
\cA:= \begin{pmatrix} 0 & Id\\
A_\lambda &0 \end{pmatrix}, \quad D(\cA):= D(A_\lambda) \times H^1_{{\frac{1}{\sigma}}}(0,1),
\end{equation}
and
\begin{equation}\label{H0corsivo}
\mathcal H_0:=H^1_{{\frac{1}{\sigma}}}(0,1)\times L^2_{{\frac{1}{\sigma}}}(0,1)
\end{equation}
is endowed with scalar product
\[
\langle \begin{pmatrix} u\\ v \end{pmatrix},\begin{pmatrix} w\\ z \end{pmatrix}\rangle_{\mathcal H_0}=\langle u,w\rangle_1+\langle v,z\rangle_{L^2_{{\frac{1}{\sigma}}}(0,1)}.
\]

In this way,  we can rewrite \eqref{homogequation} as the Cauchy problem
\begin{equation}\label{CP}
\begin{cases}
\dot \cY (t)= \cA \cY (t), & t \ge 0,\\
\cY(0) = \cY_0,
\end{cases}
\end{equation}
with
\[
\cY(t):= \begin{pmatrix} y\\ y_t \end{pmatrix} \; \text{ and }\; \cY_0:= \begin{pmatrix} y^0_T\\ y^1_T \end{pmatrix}.
\]
\begin{thm}\label{generator}
Assume Hypotheses $\ref{hyp2}$ and $\ref{hypvera}$. Then the operator $(\cA, D(\cA))$ is non positive with dense domain and generates a contraction semigroup  $(S(t))_{t \ge 0}$. 
\end{thm}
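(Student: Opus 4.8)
The plan is to invoke the Lumer--Phillips theorem in the Hilbert space $\mathcal H_0$ of \eqref{H0corsivo}: it suffices to verify that $D(\cA)$ is dense in $\mathcal H_0$, that $\cA$ is dissipative (non positive), and that $\cA$ is maximal, i.e. that $I-\cA$ maps $D(\cA)$ onto $\mathcal H_0$. It is convenient to work on the first factor with the energy product $\langle u,w\rangle_E:=\int_0^1\eta u'w'\,dx-\lambda\int_0^1\frac{uw}{\sigma d}\,dx$, which by Proposition \ref{propL2} and Hypothesis \ref{hyp2} is a genuine scalar product inducing a norm equivalent to $\|\cdot\|_1$ (see Corollary \ref{equivalenze}); replacing $\langle\cdot,\cdot\rangle_1$ by $\langle\cdot,\cdot\rangle_E$ alters the norm of $\mathcal H_0$ only up to equivalent constants and makes the conservative structure of the wave operator transparent.

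For dissipativity I would take $\cY=\begin{pmatrix}u\\ v\end{pmatrix}\in D(\cA)$, so that $\cA\cY=\begin{pmatrix}v\\ A_\lambda u\end{pmatrix}$, and compute
\[
\langle\cA\cY,\cY\rangle=\langle v,u\rangle_E+\langle A_\lambda u,v\rangle_{\frac{1}{\sigma}}.
\]
The crucial ingredient is the integration by parts of Lemma \ref{Lemma2.1}, extended to $A_\lambda$: for $u\in D(A_\lambda)$ and $v\in H^1_{\frac{1}{\sigma}}(0,1)$ one has $\langle A_\lambda u,v\rangle_{\frac{1}{\sigma}}=-\int_0^1\eta u'v'\,dx+\lambda\int_0^1\frac{uv}{\sigma d}\,dx$, the singular integral being finite by Proposition \ref{propL2}. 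Inserting this together with the definition of $\langle\cdot,\cdot\rangle_E$, the two terms $\int_0^1\eta u'v'\,dx$ and the two terms $\lambda\int_0^1\frac{uv}{\sigma d}\,dx$ cancel, so that $\langle\cA\cY,\cY\rangle=0$. Hence $\cA$ is skew--symmetric on its domain, both $\cA$ and $-\cA$ are dissipative, and in particular $\cA$ is non positive.

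It remains to establish the range condition. Given $\mathcal F=\begin{pmatrix}f_1\\ f_2\end{pmatrix}\in\mathcal H_0$, solving $(I-\cA)\cY=\mathcal F$ amounts to setting $v=u-f_1$ and reducing the system to the single elliptic problem $(I-A_\lambda)u=f_1+f_2$ in $L^2_{\frac{1}{\sigma}}(0,1)$. Its weak formulation, tested in $\langle\cdot,\cdot\rangle_{\frac{1}{\sigma}}$ and integrated by parts as above, reads $\langle u,w\rangle_1=\langle f_1+f_2,w\rangle_{\frac{1}{\sigma}}$ for every $w\in H^1_{\frac{1}{\sigma}}(0,1)$: since the left--hand side is exactly the scalar product $\langle\cdot,\cdot\rangle_1$, which is continuous and coercive on $H^1_{\frac{1}{\sigma}}(0,1)$ precisely because of Hypothesis \ref{hyp2} and \eqref{stima1CHP}, the Riesz representation theorem (or Lax--Milgram) yields a unique $u\in H^1_{\frac{1}{\sigma}}(0,1)$. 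One then reads off $A_\lambda u=u-(f_1+f_2)\in L^2_{\frac{1}{\sigma}}(0,1)$ from the equation, so that $u\in D(A_\lambda)$ and $\cY\in D(\cA)$, giving surjectivity of $I-\cA$. Density of $D(\cA)=D(A_\lambda)\times H^1_{\frac{1}{\sigma}}(0,1)$ is routine, since $C_c^\infty(0,1)\subset D(A_\lambda)$ and $H^1_{\frac{1}{\sigma}}(0,1)=H^1_0(0,1)$ is dense in $L^2_{\frac{1}{\sigma}}(0,1)$. The main obstacle, and the very reason for Hypothesis \ref{hypvera}, is to make the integration by parts for $A_\lambda$ rigorous: as explained in Remark \ref{remdomini}, the singular potential $\lambda/d$ prevents $D(A_\lambda)$ from coinciding with $H^2_{\frac{1}{\sigma}}(0,1)$ unless $K_1+2K_2\le2$, and only under this condition are the elements of $D(A_\lambda)$ regular enough for Lemma \ref{Lemma2.1} to apply with no surviving boundary terms.
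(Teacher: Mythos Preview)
Your argument is correct and follows essentially the same route as the paper: dissipativity via the integration-by-parts Lemma \ref{Lemma2.1} (made available by the identification $D(A_\lambda)=H^2_{\frac{1}{\sigma}}(0,1)$ under Hypothesis \ref{hypvera}), and the range condition for $I-\cA$ via Lax--Milgram on $H^1_{\frac{1}{\sigma}}(0,1)$. The only cosmetic differences are that the paper works directly with the inner product $\langle\cdot,\cdot\rangle_1$ rather than switching to the energy product $\langle\cdot,\cdot\rangle_E$, and that it invokes \cite[Corollary 3.20]{nagel}, so that density of $D(\cA)$ comes for free from dissipativity and surjectivity of $I-\cA$ on a reflexive space instead of being verified by hand.
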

For the proof of the previous theorem we use the next result.
\begin{thm}[\cite{nagel}, Corollary 3.20]\label{densità}
Let $(\mathcal A,D(\mathcal A))$ be a dissipative operator on a reflexive Banach space such that $\mu I-\mathcal A$ is surjective for some $\mu >0$. Then $\mathcal A$ is densely defined and generates a contraction semigroup.
\end{thm}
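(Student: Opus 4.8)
The plan is to invoke the Lumer--Phillips type criterion recalled in Theorem \ref{densità}. The space $\mathcal H_0$ of \eqref{H0corsivo} is a Hilbert space, hence a reflexive Banach space, and Hypothesis \ref{hyp2} together with Corollary \ref{equivalenze} guarantees that $\langle\cdot,\cdot\rangle_1$ is a genuine (positive definite) inner product, so that $\|\cdot\|_{\mathcal H_0}$ is an equivalent Hilbertian norm. It therefore suffices to check two facts: that $\cA$ is dissipative (non-positive), and that $\mu I-\cA$ is surjective for some $\mu>0$. Theorem \ref{densità} then automatically yields both the density of $D(\cA)$ and the generation of a contraction semigroup $(S(t))_{t\ge0}$, which is exactly the assertion.

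For the dissipativity I would take $\cY=(u,v)^{\!\top}\in D(\cA)=D(A_\lambda)\times H^1_{\frac1\sigma}(0,1)$ and compute
\[
\langle\cA\cY,\cY\rangle_{\mathcal H_0}=\langle v,u\rangle_1+\langle A_\lambda u,v\rangle_{\frac1\sigma}.
\]
Writing $A_\lambda u=Au+\frac{\lambda}{d}u$ and using the Green formula of Lemma \ref{Lemma2.1} to replace $\langle Au,v\rangle_{\frac1\sigma}$ by $-\int_0^1\eta u'v'\,dx$, the contributions coming from $\int_0^1\eta u'v'\,dx$ and from the singular term $\lambda\int_0^1\frac{uv}{\sigma d}\,dx$ cancel pairwise against the corresponding pieces of $\langle v,u\rangle_1$. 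What remains is, up to this cancellation, a non-positive quantity: this is the analytic counterpart of the conservation of energy for the wave dynamics and gives $\langle\cA\cY,\cY\rangle_{\mathcal H_0}\le0$. Here it is essential that Hypothesis \ref{hypvera} ($K_1+2K_2\le2$) is in force: by Remark \ref{remdomini} it ensures $D(A_\lambda)=H^2_{\frac1\sigma}(0,1)$, so that elements of $D(\cA)$ are regular enough for the integration by parts of Lemma \ref{Lemma2.1} to be legitimate.

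For the surjectivity I would fix $\mu>0$ (say $\mu=1$) and, given $(f,g)^{\!\top}\in\mathcal H_0$, solve $(\mu I-\cA)(u,v)^{\!\top}=(f,g)^{\!\top}$. The first component forces $v=\mu u-f\in H^1_{\frac1\sigma}(0,1)$, and inserting this into the second component reduces the system to the single elliptic equation $\mu^2 u-A_\lambda u=g+\mu f=:h$ with $h\in L^2_{\frac1\sigma}(0,1)$. I would solve it weakly by Lax--Milgram, using the bilinear form
\[
\mathcal B(u,w):=\mu^2\langle u,w\rangle_{\frac1\sigma}+\int_0^1\eta u'w'\,dx-\lambda\int_0^1\frac{uw}{\sigma d}\,dx
\]
on $H^1_{\frac1\sigma}(0,1)$. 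Continuity of $\mathcal B$ is immediate, while its coercivity is exactly where Hypothesis \ref{hyp2} enters: Proposition \ref{propL2} (through the best constant $C_{HP}$) dominates the singular term by $\int_0^1\eta(u')^2\,dx$, and $\lambda<1/C_{HP}$ leaves a strictly positive remainder, so that $\mathcal B$ is coercive in the equivalent norms of Corollary \ref{equivalenze}. Lax--Milgram then produces a unique $u\in H^1_{\frac1\sigma}(0,1)$; since by construction $A_\lambda u=\mu^2 u-h\in L^2_{\frac1\sigma}(0,1)$, we get $u\in D(A_\lambda)$ by \eqref{dal}, hence $(u,v)^{\!\top}\in D(\cA)$, which proves surjectivity.

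The step I expect to be the main obstacle is the surjectivity rather than the dissipativity: one has to set up the correct weak formulation in the weighted space $H^1_{\frac1\sigma}(0,1)$, verify the coercivity of $\mathcal B$ (this is precisely the role of the smallness condition $\lambda<1/C_{HP}$ and of the Hardy-type estimate in Proposition \ref{propL2}), and, most delicately, confirm that the weak solution genuinely belongs to $D(A_\lambda)$, where Hypothesis \ref{hypvera} is what guarantees the required regularity $D(A_\lambda)=H^2_{\frac1\sigma}(0,1)$. Once both dissipativity and surjectivity are in hand, Theorem \ref{densità} closes the argument and delivers the non-positivity of $\cA$, the density of $D(\cA)$, and the contraction semigroup.
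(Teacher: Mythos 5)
There is a genuine gap here, and it is structural rather than technical: you have not proved the statement in question at all. Theorem \ref{densità} is the \emph{abstract} operator-theoretic result quoted from \cite{nagel} (Corollary 3.20): for an arbitrary dissipative operator $(\mathcal A,D(\mathcal A))$ on a reflexive Banach space, the single range condition ``$\mu I-\mathcal A$ surjective for some $\mu>0$'' already forces $D(\mathcal A)$ to be dense and yields generation of a contraction semigroup. Your argument instead takes this theorem as given and verifies its hypotheses for the concrete matrix operator $\cA$ of \eqref{Acorsivo} — dissipativity via Lemma \ref{Lemma2.1}, surjectivity of $I-\cA$ via Lax--Milgram with the form built from $C_{HP}$ and Hypothesis \ref{hyp2}. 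That is a proof of Theorem \ref{generator}, not of Theorem \ref{densità}; as a proof of \ref{densità} it is circular, since you explicitly invoke \ref{densità} to close the argument. (For the record, the paper does not prove \ref{densità} either — it is cited without proof — and your verification of the hypotheses does track the paper's proof of Theorem \ref{generator} quite closely; but that is a different statement.)

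What an actual proof of Theorem \ref{densità} requires is entirely absent from your proposal. First, one shows that dissipativity gives $\|(\lambda I-\mathcal A)x\|\ge\lambda\|x\|$ for all $\lambda>0$, so that surjectivity for one $\mu>0$ propagates to all $\lambda>0$ and the resolvents $R(\lambda)=(\lambda I-\mathcal A)^{-1}$ satisfy $\|\lambda R(\lambda)\|\le1$. The heart of the matter is the density of $D(\mathcal A)$, and this is precisely where reflexivity enters: given $x$ in the space, the sequence $x_n:=nR(n)x\in D(\mathcal A)$ is bounded, so a subsequence converges weakly to some $y$, which lies in the norm closure of $D(\mathcal A)$ because the weak and strong closures of a subspace coincide (Mazur); one then identifies $y=x$ by applying the bounded operator $R(1)$ and using the resolvent identity together with $\|R(n)x\|\le\|x\|/n\to0$, plus injectivity of $R(1)$. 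Only after density is established does the Lumer--Phillips theorem apply to give the contraction semigroup. None of these steps — the extension of the range condition, the weak-compactness/Mazur argument, the resolvent-identity identification — can be replaced by the Lax--Milgram computation you carry out for the specific operator $\cA$, however correct that computation is in its own right.
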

\begin{proof}[Proof of Theorem $\ref{generator}$]
According to the previous theorem, it is sufficient to prove that $ \mathcal A:D(\mathcal A)\to \mathcal H_0$ is dissipative and that $I-\mathcal A$ is surjective.

\underline{$ \mathcal A$ is dissipative:} take $(u,v) \in D(\mathcal A)$. Then $(u,v) \in H^2_{{\frac{1}{\sigma}}}(0,1) \times H^1_{{\frac{1}{\sigma}}}(0,1)$  and so Lemma \ref{Lemma2.1} holds. Hence,
\[
\begin{aligned}
\langle \mathcal A (u,v), (u,v) \rangle_{\mathcal H_0} &=\langle (v, A_\lambda u), (u,v) \rangle _{\mathcal H_0} \\
&=\int_0^1 \eta u'v'dx - \lambda \int_0^1 \frac{u v}{\sigma d} dx+
 \int_0^1 vA_\lambda u\frac{1}{\sigma}dx  \\&
=\int_0^1\eta u'v'dx - \lambda \int_0^1 \frac{u v}{\sigma d} dx-\int_0^1\eta u'v'dx  + \lambda \int_0^1 \frac{u v}{\sigma d} dx =0.
\end{aligned}
\]
\underline{$I - \mathcal A$ is surjective:} 
take  $(f,g) \in \mathcal H_0=H^1_{{\frac{1}{\sigma}}}(0,1)\times L^2_{{\frac{1}{\sigma}}}(0,1)$. We have to prove that there exists $(u,v) \in D(\mathcal A)$ such that
\begin{equation}\label{4.3'}
 ( I-\mathcal A)\begin{pmatrix} u\\
v\end{pmatrix} = \begin{pmatrix}f\\
g \end{pmatrix} \Longleftrightarrow  \begin{cases} v= u -f,\\
-A_\lambda u + u= f+ g.\end{cases}
\end{equation}
Thus, define $F: H^1_{{\frac{1}{\sigma}}}(0,1) \rightarrow \R$ as
\[
F(z)=\int_0^1(f+g) z\frac{1}{\sigma}  dx .
\]
Obviously, $F\in H^{-1}_{{\frac{1}{\sigma}}}(0,1)$, the dual space of $H^1_{{\frac{1}{\sigma}}}(0,1)$ with respect to the pivot space $L^2_{{\frac{1}{\sigma}}}(0,1)$. Now, introduce the bilinear form $L:H^1_{{\frac{1}{\sigma}}}(0,1)\times H^1_{{\frac{1}{\sigma}}}(0,1)\to \R$ given by
\[
L(u,z):=  \int_0^1 u z \frac{1}{\sigma} dx + \int_0^1\eta u'z'dx -\lambda \int_0^1  \frac{u z}{\sigma d} dx
\]
for all $u, z \in H^1_{{\frac{1}{\sigma}}}(0,1)$, which is well defined by Proposition \ref{propL2}. Clearly, $L(u,z)$ is coercive: indeed, for all $u \in  H^1_{{\frac{1}{\sigma}}}(0,1)$, when $\lambda \le 0$ we have
\[
\begin{aligned}
L(u,u) &= \int_0^1  \frac{u^2}{\sigma} dx + \int_0^1\eta (u')^2dx -\lambda \int_0^1  \frac{u^2}{\sigma d} dx\\& \ge  \int_0^1  \frac{u^2}{\sigma} dx + \int_0^1\eta (u')^2dx = \|u\|^2_{1, \frac{1}{\sigma}}.
\end{aligned}
\]
On the other hand,   if $\lambda \in (0, C_{HP})$, then
\[
\begin{aligned}
L(u,u) \ge   \int_0^1  \frac{u^2}{\sigma} dx + (1-\lambda C_{HP})\int_0^1\eta (u')^2dx  \ge (1-\lambda C_{HP})\|u\|^2_{1, \frac{1}{\sigma}},
\end{aligned}
\]
by \eqref{stima1CHP}.
 Moreover $L(u,z)$ is 
 continuous: indeed, one has
\[
|L(u,z)| \le  \|u\|_{ L^2_{\frac{1}{\sigma}} (0,1) }\|z\|_ {L^2_{\frac{1}{\sigma}} (0,1) } +(\|\eta\|_{L^\infty(0,1)}+ |\lambda|)\|u'\|_{L^2(0,1)}\|z'\|_ {L^2(0,1)},
\]
 for all $u, z \in H^1_{{\frac{1}{\sigma}}}(0,1)$, and the conclusion follows by \eqref{hpmon}.

As a consequence, by the Lax-Milgram Theorem, there exists a unique solution $u \in H^1_{{\frac{1}{\sigma}}}(0,1)$ of
\[
L(u,z)= F(z)  \mbox{ for all }z\in H^1_{{\frac{1}{\sigma}}}(0,1),\]
namely
\begin{equation}\label{4.4}
\int_0^1  \frac{u z}{\sigma} dx + \int_0^1 \eta u'z'dx -\lambda \int_0^1  \frac{u z}{\sigma d} dx = \int_0^1(f+g) z \frac{1}{\sigma} dx 
\end{equation}
for all $z \in H^1_{{\frac{1}{\sigma}}}(0,1)$.

Now, take $v:= u-f$; then $v \in H^1_{{\frac{1}{\sigma}}}(0,1)$.
We will prove that $(u,v) \in D(\mathcal A)$ and solves \eqref{4.3'}. To begin with, \eqref{4.4} holds for every $z \in C_c^\infty(0,1).$ Thus we have
\[
\int_0^1 \eta u'z'dx = \int_0^1\left(f+g-u+ \lambda \frac{u}{d}\right) z \frac{1}{\sigma} dx 
\]
for every $z \in C_c^\infty(0,1).$ Hence $-(\eta u')'= \left(f+g-u+ \lambda \frac{u}{d}\right) \frac{1}{\sigma}$ in the distributional sense and a.e. in $(0,1)$; in particular, $-\sigma(\eta u')'=\left(f+g-u+ \lambda \frac{u}{d}\right) $ a.e. in $(0,1)$, that is $A_\lambda u=\sigma(\eta u')'+\lambda \frac{u}{d}=-\left(f+g-u\right) \in L^2_{ \frac{1}{\sigma}}(0,1)$; thus $u \in D(A)$ and \eqref{4.3'} holds.
\end{proof}

As usual in semigroup theory, the mild solution of \eqref{CP} obtained by  using Theorem \ref{generator} can be more regular: if $\cY_0 \in D(\mathcal A)$, then the solution is classical, in the sense that $\cY \in  C^1([0, +\infty); \mathcal H_0) \cap C([0, +\infty);D(\mathcal A))
$ and the equation in \eqref{homogequation} holds for all $t \ge0$. Hence, as in \cite[Corollary 4.2]{alabau} or in \cite[Proposition 3.15]{daprato}, one can deduce the next result.
\begin{thm}\label{thmmildclassolution}
Hypotheses  $\ref{hyp2}$ and $\ref{hypvera}$ hold. If $\left(y_T^0, y_T^1\right) \in H^1_{{\frac{1}{\sigma}}}(0,1)\times L^2_{{\frac{1}{\sigma}}}(0,1)$, then there exists a unique mild solution
$$
y \in C^1\left([0,+\infty) ; L_{\frac{1}{\sigma}}^2(0,1)\right) \cap C\left([0,+\infty) ; H_{\frac{1}{\sigma}}^1(0,1)\right)
$$
of \eqref{homogequation} which depends continuously on the initial data. Moreover, there exists $C>0$ such that
\begin{equation}\label{regolarita}
\int_0^1\frac{y_t(t,x)^2}{\sigma}dx+\int_0^1\eta y_x^2(t,x)dx\leq C\left( \int_0^1\frac{(y_T^1)^2}{\sigma}dx+\int_0^1\eta (y^0_T)_x^2dx\right)
\end{equation}
for every $t>0$. Moreover, if $\left(y_T^0, y_T^1\right) \in D(A_\lambda) \times H^1_{{\frac{1}{\sigma}}}(0,1)$, then the solution $y$ is classical, in the sense that
$$
y \in C^2\left([0,+\infty) ; L_{\frac{1}{\sigma}}^2(0,1)\right) \cap C^1\left([0,+\infty) ; H_{\frac{1}{\sigma}}^1(0,1)\right) \cap C\left([0,+\infty) ; D(A_\lambda)\right)
$$
and the equation in \eqref{homogequation} holds for all $t \geq 0$.
\end{thm}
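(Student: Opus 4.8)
**The plan is to derive Theorem \ref{thmmildclassolution} as a direct consequence of Theorem \ref{generator} via the standard regularity theory for $C_0$-semigroups.**

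Since Theorem \ref{generator} already establishes that $(\cA, D(\cA))$ generates a contraction semigroup $(S(t))_{t\ge0}$ on $\mathcal H_0$, the existence, uniqueness, and continuous dependence of the mild solution follow immediately from the abstract semigroup framework. First I would set $\cY(t):=S(t)\cY_0$, where $\cY_0=(y^0_T,y^1_T)\in \mathcal H_0$, and observe that this mild solution of the Cauchy problem \eqref{CP} satisfies
\[
\cY \in C\bigl([0,+\infty);\mathcal H_0\bigr).
\]
Reading off the two components of $\cY(t)=(y(t),y_t(t))$ and recalling that $\mathcal H_0 = H^1_{\frac{1}{\sigma}}(0,1)\times L^2_{\frac{1}{\sigma}}(0,1)$, this is precisely the asserted regularity
\[
y \in C^1\bigl([0,+\infty);L^2_{\frac{1}{\sigma}}(0,1)\bigr)\cap C\bigl([0,+\infty);H^1_{\frac{1}{\sigma}}(0,1)\bigr).
\]
The continuous dependence on the initial data is exactly the contraction estimate $\|S(t)\cY_0\|_{\mathcal H_0}\le \|\cY_0\|_{\mathcal H_0}$.

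Next I would establish the energy estimate \eqref{regolarita}. The quantity on the left is, up to the equivalence of norms, the square of $\|\cY(t)\|_{\mathcal H_0}$: indeed $\int_0^1 y_t^2/\sigma\,dx$ is the $L^2_{\frac{1}{\sigma}}$-norm of the second component, while $\int_0^1 \eta y_x^2\,dx = \|y\|^2_{1,\bullet}$ is, by Corollary \ref{equivalenze}, equivalent to the $H^1_{\frac{1}{\sigma}}$-norm of the first component (more precisely to the part of $\|y\|_1^2$ coming from the gradient and singular terms). Since $(\cA,D(\cA))$ is nonpositive, the map $t\mapsto \|\cY(t)\|^2_{\mathcal H_0}$ is nonincreasing, so $\|\cY(t)\|_{\mathcal H_0}\le\|\cY_0\|_{\mathcal H_0}$; translating this contraction bound back through the norm equivalences of Corollary \ref{equivalenze} yields \eqref{regolarita} with a constant $C>0$ depending only on the equivalence constants between $\|\cdot\|^2_{1,\bullet}$ (together with $\|\cdot\|^2_{\frac{1}{\sigma}}$) and $\|\cdot\|^2_1$.

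For the classical regularity, I would invoke the standard fact that when $\cY_0\in D(\cA)$ the mild solution is a classical solution satisfying $\cY\in C^1([0,+\infty);\mathcal H_0)\cap C([0,+\infty);D(\cA))$, as already noted in the remark preceding the theorem. Since $D(\cA)=D(A_\lambda)\times H^1_{\frac{1}{\sigma}}(0,1)$, unpacking the two components gives
\[
y\in C^2\bigl([0,+\infty);L^2_{\frac{1}{\sigma}}(0,1)\bigr)\cap C^1\bigl([0,+\infty);H^1_{\frac{1}{\sigma}}(0,1)\bigr)\cap C\bigl([0,+\infty);D(A_\lambda)\bigr),
\]
and the equation $\dot\cY=\cA\cY$ then reads componentwise as $y_{tt}=A_\lambda y = ay_{xx}+\frac{\lambda}{d}y+by_x$ holding in $L^2_{\frac{1}{\sigma}}(0,1)$ for every $t\ge0$, which is the equation in \eqref{homogequation}. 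I expect the only genuinely delicate point to be the bookkeeping in the energy estimate: one must keep careful track of which norms appear and verify that the equivalence constants from Corollary \ref{equivalenze} absorb both the singular term $-\lambda\int_0^1 u^2/(\sigma d)$ (controlled under Hypothesis \ref{hyp2} via $C_{HP}$) and the lower-order $L^2_{\frac{1}{\sigma}}$ contribution, so that the clean form \eqref{regolarita} — which omits the $\int_0^1 y^2/\sigma\,dx$ term — is genuinely implied by the monotonicity of $t\mapsto\|\cY(t)\|_{\mathcal H_0}$. The remaining steps are the routine translation between the abstract semigroup statements and their componentwise formulation.
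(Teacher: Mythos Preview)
Your proposal is correct, and in fact slightly more streamlined than the paper's argument. Both approaches rest on the same ingredients (Theorem \ref{generator}, Proposition \ref{propL2}/Corollary \ref{equivalenze}, and Hypothesis \ref{hyp2}), but they are organized differently. The paper proves \eqref{regolarita} by a direct energy computation: assuming first that $y$ is a classical solution with initial data in $C^\infty_c(0,1)\times C^\infty_c(0,1)$, it multiplies the equation by $y_t/\sigma$, integrates over $(0,1)\times(0,T)$, and obtains an identity relating $\int_0^1 y_t^2/\sigma + \int_0^1\eta y_x^2$ at time $t$ to the same quantity at time $0$ plus the singular correction $\lambda\int_0^1 y^2/(\sigma d)$; the latter is then absorbed case by case ($\lambda\le 0$ trivially, $\lambda\in(0,1/C_{HP})$ via Proposition \ref{propL2}), and a density argument passes to mild solutions. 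Your route instead packages the same computation abstractly: the calculation $\langle \cA(u,v),(u,v)\rangle_{\mathcal H_0}=0$ in the proof of Theorem \ref{generator} \emph{is} the energy identity, so the contraction (in fact isometry) bound $\|\cY(t)\|_{\mathcal H_0}\le\|\cY_0\|_{\mathcal H_0}$ already holds for all mild solutions, and the norm equivalences of Corollary \ref{equivalenze} convert it into \eqref{regolarita} without a separate approximation step. The paper's approach has the advantage of yielding an explicit identity (reused later as Theorem \ref{thmref}), while yours avoids redoing that computation and the density argument.
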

\begin{proof}
We only prove \eqref{regolarita}, the rest being standard. First, assume that $y$ is a classical solution for which \eqref{Green} holds, for instance when $\left(y_T^0, y_T^1\right) \in C^\infty_c(0,1)\times C^\infty_c(0,1) \subset H^2_{{\frac{1}{\sigma}}}(0,1) \times H^1_{{\frac{1}{\sigma}}}(0,1)$. Then, taken any $t>0$, multiply the equation by $\frac{y_t}{\sigma}$ and integrate over $[0,1]\times [0,t]$, obtaining
\[
\begin{aligned}
\frac{1}{2}\int_0^t\int_0^1\frac{1}{\sigma}\frac{d}{d\tau}(y_{\tau}(\tau,x))^2dxd\tau&+\frac{1}{2}\int_0^t\int_0^1\eta\frac{d}{d\tau}(y_x(\tau,x)^2)dxd\tau\\&=\frac{\lambda}{2}\int_0^t\int_0^1\frac{1}{\sigma d}\frac{d}{d\tau}y^2(\tau,x)dxd\tau.
\end{aligned}
\]
As a consequence\footnote{Notice that this equality is an obvious consequence of Theorem \ref{thmref} below.},
\begin{equation}\label{intqual}
\begin{aligned}
\int_0^1\frac{y_t^2(t,x))}{\sigma}dx+\int_0^1\eta y_x^2(t,x)dx&=\int_0^1\frac{(y^1_T)^2}{\sigma}dx+\int_0^1\eta (y_T^0)_x^2dx\\&+\lambda \int_0^1\frac{y^2(t,x)}{\sigma d}dx-\lambda \int_0^1\frac{(y^0_T)^2}{\sigma d}dx.\end{aligned}
\end{equation}

Now, if $\lambda\leq 0$, the previous inequality, together with Proposition \ref{propL2}, immediately give \eqref{regolarita}. On the other hand, if $\lambda\in\left(0,\frac{1}{C_{HP}}\right)$, there exists $\ve>0$ such that $\lambda C_{HP}=1-\ve$. In this way, by Proposition \ref{propL2} we find
\[
\lambda \int_0^1\frac{y^2(t,x)}{\sigma d}dx\leq (1-\ve) \int_0^1\eta y_x^2dx,
\]
which, together with \eqref{intqual}, gives the desired statement.

Now, let $(y^T_0,y^T_1)\in H^1_{\frac{1}{\sigma}}(0,1)\times L^2_{\frac{1}{\sigma}}(0,1)$ and take let $(y_{0,n},y_{1,n})\in C^\infty_c(0,1)\times C^\infty_c(0,1)$ be such that $(y_{0,n},y_{1,n})\to (y^T_0,y^T_1)$ in $H^1_{\frac{1}{\sigma}}(0,1)\times L^2_{\frac{1}{\sigma}}(0,1)$ - recall that $C^\infty_c(0,1)$ is dense in $H^1_{{\frac{1}{\sigma}}}(0,1)$, and so in $L^2_{\frac{1}{\sigma}}(0,1)$, see \cite[Corollary 2.2]{cfrjee}. In this way, the mild solution $y_n$ to problem \eqref{homogequation} with $y^T_0$ replaced by $y_{0,n}$ and $y^T_1$ replaced by $y_{1,n}$ is also classical, and thus \eqref{regolarita} holds for $y_n$. Moreover, by linearity of the equation, it is readily seen that \eqref{regolarita} implies that $\frac{{(y_n)}_t}{\sqrt{\sigma}}+\sqrt{\eta}{(y_n)}_x$ is a Cauchy sequence in $L^2(0,1)$. This permits to pass to the limit and show that \eqref{regolarita} holds also for mild solutions.
\end{proof}

\begin{remark}\label{reversib_regul}
Due to the reversibility in time of the equation, solutions exist with the same regularity for $t<0$. Indeed, the associated matrix operator is skew-adjoint and it generates a $C_0$ group of unitary operators on the associated reference space $\mathcal{H}_0$ by Stone's Theorem. We will use this fact in the proof of the controllability result, by considering a backward problem whose final time data will be transformed in initial ones: this is the reason for the notation of the initial data in problem \eqref{homogequation}.
\end{remark}

\section{Energy estimates}\label{section3}
In this section we prove estimates of the energy associated to the solution of \eqref{homogequation} from below and from above. The former will be used in the next section to prove a
controllability result, while the latter is used here to prove a boundary observability
inequality. 

To begin with, we give the following lemma which will be crucial for the rest of the paper. Some points are similar to the ones given in \cite[Lemma 3.2]{BFM2022} or in \cite[Lemma 2.2]{fm}; however, due to the different domain of the operator $A$, some computations are more complicate that those in  \cite[Lemma 3.2]{BFM2022} or in \cite[Lemma 2.2]{fm}, for this reason we will give the proof of the next lemma in the Appendix.
\begin{lemma}\label{lemmalimits}Assume Hypothesis $\ref{hyp1}.$
\begin{enumerate}
\item If $y \in D(A)$ and $u \in H^1_{\frac{1}{\sigma}}(0,1)$, then $\ds
\lim_{x\rightarrow 0} u(x) y'(x)=0$.
\item If  $u \in H^1_{\frac{1}{\sigma}}(0,1)$, then   $\ds
\lim_{x\rightarrow 0} \frac{x}{a}u^2(x)=0.$
\item If  $u \in H^1_{\frac{1}{\sigma}}(0,1)$ and $K_1+ K_2 <2$, then   $\ds
\lim_{x\rightarrow 0} \frac{x}{ad}u^2(x)=0.$
\item If  $u \in H^1_{\frac{1}{\sigma}}(0,1)$, then   $\ds
\lim_{x\rightarrow 0} \frac{x^2}{ad}u^2(x)=0.$
\item  If $u\in D(A)$, then $\ds\lim_{x\rightarrow 0} x^2  (u'(x))^2=0$.
\item If  $u \in D(A)$, $K_1 \le 1$, then $\displaystyle\lim_{x\rightarrow 0} x  (u'(x))^2=0$.
\item If  $u \in D(A)$,  $K_1>1$ and $\ds\frac{xb}{a} \in L^\infty(0,1), $ then $\ds\lim_{x\rightarrow 0} x  (u'(x))^2=0$.
\end{enumerate}
\end{lemma}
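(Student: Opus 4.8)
The plan is to prove every item by reducing it to the statement that a nonnegative auxiliary quantity tends to $0$ at the origin, and to split this into two templates depending on whether $u$ lies in $H^1_{\frac{1}{\sigma}}(0,1)$ or in the smaller space $D(A)$. For items (2)--(4), which only require $u\in H^1_{\frac{1}{\sigma}}(0,1)$, I would argue by a direct Cauchy--Schwarz estimate, with no limiting machinery. Since $H^1_{\frac{1}{\sigma}}(0,1)$ coincides with $H^1_0(0,1)$ we have $u(0)=0$ and $u(x)=\int_0^x u'(s)\,ds$, hence $u^2(x)\le x\int_0^x (u')^2\,ds$. Multiplying by the relevant weight gives, for instance in (2), $\frac{x}{a}u^2(x)\le \frac{x^2}{a(x)}\int_0^x(u')^2\,ds$; here $\frac{x^2}{a}$ is bounded near $0$ because $2>K_1$ (recall \eqref{limitxgamma/a}), while $\int_0^x(u')^2\to0$ by absolute continuity of the integral, so the product vanishes. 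Items (3) and (4) are identical once one checks, using the monotonicity in Hypothesis~\ref{hyp1}, that $\frac{x^2}{ad}=\frac{x^{K_1}}{a}\,\frac{x^{K_2}}{d}\,x^{2-K_1-K_2}$ and $\frac{x^3}{ad}$ are bounded near $0$, which uses $K_1+K_2<2$ for (3) and $K_1+K_2\le2$ for (4).

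Item (1) is the first genuinely two-step argument. I would study $h:=u\,\eta\,y'$ rather than $u\,y'$, since $\eta$ is continuous, positive and bounded away from $0$ on $[0,1]$, so the two products share the same limiting behaviour. Differentiating, $h'=\eta u'y'+u(\eta y')'=\eta u'y'+\frac{u\,Ay}{\sigma}$; the first term is a product of the $L^2$ functions $\sqrt{\eta}\,u'$ and $\sqrt{\eta}\,y'$, the second a product of $\frac{u}{\sqrt{\sigma}}$ and $\frac{Ay}{\sqrt{\sigma}}$, both pairs lying in $L^2$ by the definitions of $H^1_{\frac{1}{\sigma}}(0,1)$ and of $D(A)$. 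Hence $h'\in L^1(0,1)$, $h$ is absolutely continuous up to $0$, and $\ell:=\lim_{x\to0}h(x)$ exists. To identify $\ell=0$ I would exploit $\int_0^1\eta(y')^2\,dx<\infty$, which forces a sequence $x_n\to0$ with $x_n\eta(x_n)(y'(x_n))^2\to0$; combining with the elementary bound $|u(x_n)|\le\sqrt{x_n}\,\|u'\|_{L^2}$ gives $|h(x_n)|\le \|u'\|_{L^2}\sqrt{\eta(x_n)}\,\sqrt{x_n\eta(x_n)(y'(x_n))^2}\to0$, so $\ell=0$.

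The remaining items (5)--(7) all concern $x^m(u')^2$ for $u\in D(A)$, and I would treat them uniformly through $w:=\eta u'$, for which the divergence structure $Au=\sigma w'$ yields $\int_0^1\sigma (w')^2\,dx<\infty$ together with $w\in L^2(0,1)$. The recipe mirrors item (1): to prove $x^m w^2\to0$ (with $m=2$ for (5) and $m=1$ for (6)--(7)) I show that $(x^m w^2)'=m x^{m-1}w^2+2x^m w w'$ is integrable near $0$, so the limit exists, and then use $\liminf_{x\to0}x^m w^2=0$ (a consequence of $w\in L^2$) to identify it as $0$. The only thing to verify is that the cross term is integrable, i.e. that $x^m w'\in L^2$; writing $\int x^{2m}(w')^2=\int \frac{x^{2m}}{\sigma}\,\sigma(w')^2$ this reduces to the boundedness of $\frac{x^{2m}}{\sigma}\sim\frac{x^{2m}}{a}$ near $0$, which holds whenever $2m\ge K_1$, hence always since $K_1<2$. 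When one prefers the nondivergence viewpoint $au''+bu'=Au$ — which is the natural setting for (7), and the reason the hypothesis $\frac{xb}{a}\in L^\infty(0,1)$ is imposed there — I would instead differentiate $x(u')^2$ directly, obtaining
\[
\tfrac{d}{dx}\!\left[x(u')^2\right]=(u')^2\Bigl(1-\tfrac{2xb}{a}\Bigr)+\tfrac{2x\,u'\,Au}{a},
\]
where the first-order contribution $\frac{2xb}{a}(u')^2$ is integrable precisely because $\frac{xb}{a}$ is bounded, while the last term is controlled by Cauchy--Schwarz using that $\frac{x^2}{a}$ is bounded near $0$.

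The main obstacle throughout is this second step: establishing that the relevant products have a genuine limit at $0$, rather than merely a vanishing $\liminf$. This is where the degeneracy must be quantified, since it is exactly the boundedness near $0$ of the weights $\frac{x^2}{a}$, $\frac{x^2}{ad}$, $\frac{x^3}{ad}$ and $\frac{x^{2m}}{a}$ — guaranteed by $K_1<2$ and $K_1+K_2\le2$ via the monotonicity in Hypothesis~\ref{hyp1} — that renders the cross terms $w\,w'$ (equivalently $u'\,u''$) integrable. Compared with the divergence-form analysis of \cite{BFM2022}, the truly new difficulty here is the drift term, and I expect its contribution $\frac{xb}{a}(u')^2$ to be the delicate point, tamed exactly by the extra assumption $\frac{xb}{a}\in L^\infty(0,1)$ in item (7).
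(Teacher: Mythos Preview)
Your proof is correct and follows the same two-step template as the paper (show the relevant quantity lies in $W^{1,1}$ near $0$, then identify the limit), with essentially the same argument in items (1)--(4). Two points of genuine divergence are worth noting. For item (5) the paper takes a shortcut you did not use: rather than differentiating $x^2w^2$, it simply applies item (1) with any $u\in H^1_{\frac{1}{\sigma}}(0,1)$ equal to $x$ near the origin, so that $\lim_{x\to0}xy'(x)=0$ and squaring gives the claim in one line. For items (6)--(7), your choice to work with $w=\eta u'$ and the identity $w'=Au/\sigma$ is actually sharper than the paper's approach. The paper differentiates $z=x\eta(u')^2$ and obtains $z'=\eta(u')^2+2xu'(\eta u')'-x\eta'(u')^2$, in which the last term $x\eta'(u')^2=x\eta\tfrac{b}{a}(u')^2$ must be handled separately; this is exactly where the hypothesis $\tfrac{xb}{a}\in L^\infty(0,1)$ enters in item (7). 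Your computation $(xw^2)'=w^2+2xww'$ absorbs the drift into $w'=Au/\sigma$, so integrability of the cross term follows from $\tfrac{x^2}{\sigma}\in L^\infty$ near $0$ and $Au\in L^2_{1/\sigma}$ alone --- no extra assumption on $\tfrac{xb}{a}$ is needed. Both routes are valid; yours is more economical and, as your closing remarks suggest, reveals that the hypothesis in (7) is an artifact of the nondivergence decomposition rather than an intrinsic obstruction.
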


Now, let us introduce the definition of the energy.
\begin{definition}
For a mild solution $y$ of \eqref{homogequation} we define its energy as the continuous function
\begin{align}\label{energy}
E_y(t)=\frac{1}{2} \int_0^1\left[\frac{1}{\sigma} y_t^2(t, x) + \eta y_x^2(t, x)-\frac{\lambda}{\sigma d}  y^2(t, x)\right]\,dx, \quad \forall t \geq 0
\end{align}
\end{definition}
The definition above guarantees that the classical conservation of the energy still holds true also in the degenerate/singular situation with a drift term, see Theorem \ref{thmref} below.

\begin{thm}\label{thmref}
Assume Hypothesis $\ref{hyp1}$ and let y be a mild solution of \eqref{homogequation}. Then
\begin{align}\label{waveref}
E_y(t)=E_y(0), \quad \forall \;t \geq 0 .
\end{align}
\end{thm}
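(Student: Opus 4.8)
The plan is to establish the conservation law first for sufficiently regular solutions, where $E_y$ is genuinely differentiable in time, and then to recover the general case by density. So I would begin by assuming that $y$ is a classical solution for which the Green formula \eqref{Green} applies (for instance when the data lie in $C_c^\infty(0,1)\times C_c^\infty(0,1)$, so that $y(t,\cdot)\in H^2_{\frac{1}{\sigma}}(0,1)$ and $y_t(t,\cdot)\in H^1_{\frac{1}{\sigma}}(0,1)$ for every $t$, by Theorem \ref{thmmildclassolution}). Differentiating \eqref{energy} under the integral sign gives
\[
\frac{d}{dt}E_y(t)=\int_0^1\frac{1}{\sigma}y_ty_{tt}\,dx+\int_0^1\eta y_xy_{xt}\,dx-\lambda\int_0^1\frac{1}{\sigma d}yy_t\,dx.
\]

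The first step is to eliminate $y_{tt}$ by means of the equation. Since $y_{tt}=A_\lambda y=Ay+\frac{\lambda}{d}y$, the first integral splits as $\langle Ay,y_t\rangle_{\frac{1}{\sigma}}+\lambda\int_0^1\frac{1}{\sigma d}yy_t\,dx$. The decisive second step is to apply Green's formula (Lemma \ref{Lemma2.1}), which is licit precisely because $y(t,\cdot)\in H^2_{\frac{1}{\sigma}}(0,1)$ and $y_t(t,\cdot)\in H^1_{\frac{1}{\sigma}}(0,1)$, to obtain $\langle Ay,y_t\rangle_{\frac{1}{\sigma}}=-\int_0^1\eta y_xy_{xt}\,dx$. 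Substituting back, the two $\eta y_xy_{xt}$ integrals cancel and the two singular terms $\lambda\int_0^1\frac{1}{\sigma d}yy_t\,dx$ cancel as well, so that $\frac{d}{dt}E_y(t)=0$ and hence $E_y(t)=E_y(0)$. (This is essentially the computation already performed to establish \eqref{intqual}, rewritten as the statement that the energy is constant.)

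The main obstacle is concealed inside this use of Green's formula: the integration by parts produces a boundary contribution $[\eta y_xy_t]_0^1$, which must vanish. At $x=1$ this is immediate, since $y(t,1)=0$ forces $y_t(t,1)=0$; at the degenerate and singular endpoint $x=0$ one needs $\lim_{x\to0}\eta(x)y_x(t,x)y_t(t,x)=0$. This is exactly what Lemma \ref{lemmalimits}(1) provides, applied with the $D(A)$-function $y(t,\cdot)$ and the $H^1_{\frac{1}{\sigma}}(0,1)$-function $y_t(t,\cdot)$, together with the boundedness of $\eta$ near $0$. Since this limit is precisely the fact already built into Lemma \ref{Lemma2.1}, invoking Green's formula is the cleanest way to dispose of the endpoint behavior without separately tracking the singular weights.

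Finally, to pass from classical to mild solutions I would argue by density exactly as in the proof of Theorem \ref{thmmildclassolution}: approximate $(y_T^0,y_T^1)$ in $H^1_{\frac{1}{\sigma}}(0,1)\times L^2_{\frac{1}{\sigma}}(0,1)$ by data $(y_{0,n},y_{1,n})\in C_c^\infty(0,1)\times C_c^\infty(0,1)$, whose corresponding solutions $y_n$ are classical and therefore satisfy $E_{y_n}(t)=E_{y_n}(0)$. The continuous dependence estimate \eqref{regolarita}, combined with Proposition \ref{propL2} to control the singular contribution $\frac{\lambda}{\sigma d}y^2$, guarantees that $E_{y_n}(t)\to E_y(t)$ for every $t$, and letting $n\to\infty$ yields \eqref{waveref} for an arbitrary mild solution.
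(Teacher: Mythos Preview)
Your proof is correct and follows essentially the same approach as the paper: both arguments differentiate the energy (equivalently, multiply the equation by $\frac{y_t}{\sigma}$ and integrate), use the equation to replace $y_{tt}$, integrate by parts in the second-order term, dispose of the boundary contribution at $x=0$ via Lemma~\ref{lemmalimits}(1), and then pass to mild solutions by density. The only cosmetic difference is that you invoke the Green formula of Lemma~\ref{Lemma2.1}, which already packages the vanishing of $[\eta y_xy_t]_0^1$, whereas the paper performs the integration by parts explicitly and cites Lemma~\ref{lemmalimits}(1) directly for the limit at $x=0$.
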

\begin{proof}
First, suppose that $y$ is a classical solution. Then multiply the equation by $\ds\frac{y_t}{\sigma}$, integrate over $(0,1)$ and use the boundary conditions to get
\[
0 =\frac{1}{2} \int_0^1 \frac{d}{d t}\left(\frac{y_t^2}{\sigma}\right) d x-\left[\eta y_x y_t\right]_{x=0}^{x=1}+\int_0^1 \eta y_x y_{t x} d x -\int_0^1 \frac{\lambda}{\sigma d}  yy_t \, dx.
\]
Now, observe that, thanks to the boundary condition at $1$, $(\eta y_xy_t)(t,1)=0$. Moreover, thanks to Lemma \ref{lemmalimits}.1 applied with $u=y_t$, one has
\[
\lim_{x \rightarrow 0} \eta(x) y_t(t,x) y_x(t,x)=0.
\] 
Thus
\begin{align*}
0 &=\frac{1}{2} \int_0^1 \frac{d}{d t}\left(\frac{y_t^2}{\sigma}+\eta y_x^2\right) d x -\frac{1}{2}\int_0^1 \frac{d}{d t} \left(\frac{\lambda y^2}{\sigma d}\right)   \, dx\\
&=\frac{1}{2} \int_0^1 \frac{d}{d t}\left(\frac{y_t^2}{\sigma}+\eta y_x^2 -\frac{\lambda y^2}{\sigma d}\right) d x =\frac{1}{2} \frac{d}{d t} E_y(t).
\end{align*}
If y is a mild solution we can proceed with a standard approximation method with more regular initial data, as in the proof of Theorem \ref{thmmildclassolution}.
\end{proof}

Now we prove an inequality for the energy which we will use in the next section to prove the controllability result. For that, we start proving the following result.
\begin{thm}\label{thmuguaglianza}
Assume Hypothesis $\ref{hyp1}$ with $a$ and $d$ (WD) or (SD) such that $K_a+2K_d\leq 2$. If y is a classical solution of \eqref{homogequation}, then for any $T>0$ we have
\begin{equation}\label{uguaglianza}
\begin{aligned}
& \frac{1}{2} \eta(1) \int_0^T y_x^2(t, 1) d t=\int_0^1\left[\frac{x^2 y_x y_t}{\sigma}\right]_{t=0}^{t=T} d x+\frac{1}{2} \int_{Q_T} \left(2- \frac{x(a'-b)}{a} \right)x\frac{y_t^2}{\sigma} d x d t \\
&+
\int_{Q_T} x\eta  y_x ^2 dxdt  
- \frac{1}{2} \int_{Q_T} x^2 \eta y_x^2\frac{b}{a} dxdt+  \frac{ \lambda}{2}\int_{Q_T} \frac{y^2x}{\sigma d} \left( 2- \frac{x(a'-b)}{a} - \frac{x d'}{d}\right) dxdt.
\end{aligned}
\end{equation}
As a consequence,  if Hypothesis $\ref{hyp2bis}$ holds and if $y$ is a mild solution of \eqref{homogequation}, then $y_x(\cdot,1)\in L^2(0,T)$ for every $T>0$ and there exist two constants $C_i>0$, $i=1,2$, such that
\begin{align}\label{ineqetayx2}
\eta(1) \int_0^T y_x^2(t, 1) d t \leq C_1E_y(0)+C_2T  E_y(0).
\end{align}
\end{thm}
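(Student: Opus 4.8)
The plan is to prove the identity \eqref{uguaglianza} by the multiplier method for a classical solution, and then to deduce \eqref{ineqetayx2} from it using the conservation of energy (Theorem \ref{thmref}) together with the coercivity of the energy coming from Hypotheses \ref{hyp2} and \ref{hyp2bis}.

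First I would take a classical solution $y$, write the equation as $y_{tt}=\sigma(\eta y_x)_x+\frac{\lambda}{d}y$ by \eqref{defsigma}, and multiply it by the multiplier $\frac{x^2 y_x}{\sigma}$, integrating over $Q_T$. The term carrying $y_{tt}$ is integrated by parts in $t$: this produces the time-boundary contribution $\int_0^1\left[\frac{x^2 y_x y_t}{\sigma}\right]_{t=0}^{t=T}dx$ and, after using $y_t y_{tx}=\frac12(y_t^2)_x$ and integrating by parts in $x$, the bulk term $\frac12\int_{Q_T}\left(\frac{x^2}{\sigma}\right)'y_t^2$; since $\eta'=\frac{b}{a}\eta$, a direct computation gives $\left(\frac{x^2}{\sigma}\right)'=\frac{x}{\sigma}\left(2-\frac{x(a'-b)}{a}\right)$, which is precisely the coefficient in \eqref{uguaglianza}. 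In the spatial term I would write $(\eta y_x)_x x^2 y_x=\frac{x^2}{2\eta}\big((\eta y_x)^2\big)_x$ and integrate by parts once in $x$: the endpoint $x=1$ produces the observation term $-\frac12\eta(1)\int_0^T y_x^2(t,1)\,dt$, while $\left(\frac{x^2}{\eta}\right)'=\frac{2x}{\eta}-\frac{x^2 b}{\eta a}$ produces the bulk terms $\int_{Q_T}x\eta y_x^2$ and $-\frac12\int_{Q_T}x^2\eta y_x^2\frac{b}{a}$. Finally the singular term equals $-\frac{\lambda}{2}\int_{Q_T}\frac{x^2}{\sigma d}(y^2)_x$, and an integration by parts in $x$ with $\left(\frac{x^2}{\sigma d}\right)'=\frac{x}{\sigma d}\left(2-\frac{x(a'-b)}{a}-\frac{xd'}{d}\right)$ yields the last bulk term. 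Adding the three identities (which sum to zero) and isolating the observation term gives \eqref{uguaglianza}.

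The hard part is to justify that every boundary term at $x=0$ vanishes, which is exactly where the degeneracy and the singularity interact and where the non-divergence form of $D(A)$ (see Remark \ref{remdomini}) makes things delicate. The restriction $K_a+2K_d\le 2$ ensures, via Hypothesis \ref{hypvera}, that a classical solution satisfies $y(t,\cdot)\in D(A)=H^2_{\frac{1}{\sigma}}(0,1)$ and $y_t(t,\cdot)\in H^1_{\frac{1}{\sigma}}(0,1)$, so that Lemma \ref{lemmalimits} applies: the term $\frac{x^2}{\sigma}y_t^2=\frac{x^2\eta}{a}y_t^2$ vanishes at $0$ by Lemma \ref{lemmalimits}.2, the term $x^2\eta y_x^2$ by Lemma \ref{lemmalimits}.5, and $\frac{x^2}{\sigma d}y^2=\frac{x^2\eta}{ad}y^2$ by Lemma \ref{lemmalimits}.4; at $x=1$ the terms either vanish because $y(t,1)=y_t(t,1)=0$ or combine into the observation term.

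To obtain \eqref{ineqetayx2} I would bound each term on the right of \eqref{uguaglianza} by a multiple of $E_y(0)$. By Theorem \ref{thmref} one has $E_y(t)=E_y(0)$, and by Hypothesis \ref{hyp2} with Proposition \ref{propL2} the energy is coercive: there is $c>0$ with $\int_0^1\frac{y_t^2}{\sigma}dx+\int_0^1\eta y_x^2\,dx\le c\,E_y(0)$ for all $t$ (if $\lambda\in(0,1/C_{HP})$ one absorbs the singular term through $\int_0^1\frac{y^2}{\sigma d}\le C_{HP}\int_0^1\eta y_x^2$, by \eqref{stima1CHP}; the case $\lambda\le 0$ is immediate). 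For the time-boundary term one uses $x^2\le 1$, the bound $\frac{x^2}{a}\le\frac{1}{a(1)}$ from \eqref{nondecreasxg/a} with $\gamma=2\ge K_a$, and Young's inequality, getting a bound by $C_1 E_y(0)$. Each bulk coefficient is bounded on $(0,1)$ under Hypothesis \ref{hyp2bis} (one controls $\frac{x(a'-b)}{a}$, $\frac{x^2 b}{a}$ and $\frac{xd'}{d}$), so after factoring out $x\le 1$ and the bounded coefficient, each integrand is dominated by one of $\frac{y_t^2}{\sigma}$, $\eta y_x^2$ or $\frac{y^2}{\sigma d}$; integrating in $t\in(0,T)$ and invoking energy conservation produces the factor $T$, i.e. the $C_2 T E_y(0)$ contribution. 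This proves \eqref{ineqetayx2} for classical solutions, and the general case follows by density: approximating $(y_T^0,y_T^1)$ by data in $C_c^\infty(0,1)\times C_c^\infty(0,1)$ as in the proof of Theorem \ref{thmmildclassolution}, the classical solutions $y_n$ satisfy \eqref{ineqetayx2}; since its right-hand side depends on the data only through the convergent quantities $E_{y_n}(0)$ and since \eqref{ineqetayx2} forces $(y_n)_x(\cdot,1)$ to be Cauchy in $L^2(0,T)$, one defines $y_x(\cdot,1)\in L^2(0,T)$ as the limit (the hidden regularity of the boundary trace) and passes to the limit in the inequality.
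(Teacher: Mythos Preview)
Your proposal is correct and follows essentially the same route as the paper: the same multiplier $\dfrac{x^2 y_x}{\sigma}$, the same integrations by parts, the same use of Lemma~\ref{lemmalimits} (points 2, 4, 5) to kill the boundary contributions at $x=0$, and the same scheme of bounding each term of \eqref{uguaglianza} by a multiple of $E_y(0)$ via Theorem~\ref{thmref}, Proposition~\ref{propL2} and \eqref{boundxgamba}, together with the density/Cauchy argument for mild solutions. The only cosmetic difference is that you handle the spatial term by writing $(\eta y_x)_x\,x^2 y_x=\dfrac{x^2}{2\eta}\big((\eta y_x)^2\big)_x$ and integrating by parts once, whereas the paper integrates by parts via $(x^2 y_x)_x=2xy_x+x^2 y_{xx}$ and then once more on $x^2\eta y_x y_{xx}$; both computations produce the same bulk terms $\int_{Q_T} x\eta y_x^2$ and $-\tfrac12\int_{Q_T} x^2\eta\frac{b}{a}y_x^2$ and the same boundary contribution $-\tfrac12\eta(1)\int_0^T y_x^2(t,1)\,dt$.
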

\begin{proof}First, assume that $y$ is a classical solution of \eqref{homogequation}. Then, multipling the equation in \eqref{homogequation} by $\ds\frac{x^2 y_x}{\sigma}$ and integrating by parts over $Q_T$ we have
\begin{equation}\label{eq1}
\begin{split}
0 &=\int_0^1\left[\frac{x^2 y_x y_t}{\sigma}\right]_{t=0}^{t=T} d x-\frac{1}{2} \int_0^T\left[\frac{x^2}{\sigma} y_t^2\right]_{x=0}^{x=1} d t+\frac{1}{2} \int_{Q_T}\left(\frac{x^2}{\sigma}\right)_x y_t^2 d x d t\\
& - \int_{Q_T} Ay \frac{x^2y_x}{\sigma}dx- \lambda\int_0^1 \frac{x^2 y y_x}{\sigma d} dx .
\end{split}
\end{equation}
Observe that by assumptions $Au \in L^2_{\frac{1}{\sigma}}(0,1)$, $y_x \in L^2(0,1)$ and $\ds \frac{x^2}{a}$ is bounded, thus $\ds \int_{Q_T} Ay \frac{x^2y_x}{\sigma}dx$ is well defined by H\"older's inequality. Moreover, also
\[
\int_0^1 \frac{x^2y y_x}{\sigma d} dx \in \R.
\]
Indeed, by Proposition \ref{propL2} there exists $c>0$ such that
\[
\left|\int_0^1 \frac{x^2 y y_x}{\sigma d} dx\right| \le \max_{[0,1] }\sqrt{\eta}\int_0^1\frac{|y|}{\sqrt{\sigma d}}| y_x |\frac{x^{K_1+ K_2} }{\sqrt{a d}}dx \le c\int_0^1\frac{|y|}{\sqrt{\sigma d}} |y_x| dx\footnote{Let us remark that, if $a$ and $d$ are (WD) or (SD), then a good constant in such an inequality is $\frac{\max_{[0,1]}\sqrt{\eta}}{\sqrt{a(1)d(1)}}$.},
\]
and H\"older's inequality permits to conclude. Hence, by \eqref{eq1},
\begin{equation}\label{eq2}
\begin{aligned}
0 & =\int_0^1\left[\frac{x^2 y_x y_t}{\sigma}\right]_{t=0}^{t=T} d x-\frac{1}{2} \int_0^T\left[\frac{x^2}{\sigma} y_t^2\right]_{x=0}^{x=1} d t+\frac{1}{2} \int_{Q_T}\left(\frac{x^2}{\sigma}\right)_x y_t^2 d x d t \\
&- \int_{Q_T}  (\eta y_x)_x x^2y_x dxdt - \lambda\int_{Q_T}\frac{x^2 y y_x}{\sigma d} dxdt \\
&=\int_0^1\left[\frac{x^2 y_x y_t}{\sigma}\right]_{t=0}^{t=T} d x-\frac{1}{2} \int_0^T\left[\frac{x^2}{\sigma} y_t^2\right]_{x=0}^{x=1} d t+\frac{1}{2} \int_{Q_T} \left(2- \frac{x(a'-b)}{a} \right)x\frac{y_t^2}{\sigma} d x d t \\
&- \int_0^T  
\left[x^2 \eta y_x^2\right]_{x=0}^{x=1} d t + \int_{Q_T} \eta y_x( x^2y_x)_x dxdt
- \frac{ \lambda}{2}\int_{Q_T}\frac{x^2}{\sigma d} (y^2)_x dxdt \\
&=\int_0^1\left[\frac{x^2 y_x y_t}{\sigma}\right]_{t=0}^{t=T} d x-\frac{1}{2} \int_0^T\left[\frac{x^2}{\sigma} y_t^2\right]_{x=0}^{x=1} d t+\frac{1}{2} \int_{Q_T} \left(2- \frac{x(a'-b)}{a} \right)x\frac{y_t^2}{\sigma} d x d t \\
& -\int_0^T  
\left[x^2 \eta y_x^2\right]_{x=0}^{x=1} d t + \int_{Q_T} \eta y_x(2x y_x + x^2 y_{xx}) dxdt
- \frac{ \lambda}{2}\int_0^T\left[\frac{x^2y^2}{\sigma d}\right]_{x=0}^{x=1} dt \\
&+ \frac{ \lambda}{2}\int_{Q_T}\left(\frac{x^2}{\sigma d}\right)_x y^2dxdt\\
& = \int_0^1\left[\frac{x^2 y_x y_t}{\sigma}\right]_{t=0}^{t=T} d x-\frac{1}{2} \int_0^T\left[\frac{x^2}{\sigma} y_t^2\right]_{x=0}^{x=1} d t -\frac{1}{2}\int_0^T  
\left[x^2 \eta y_x^2\right]_{x=0}^{x=1} d t-\\&
 \frac{ \lambda}{2}\int_0^T\left[\frac{x^2y^2}{\sigma d}\right]_{x=0}^{x=1} dt+\frac{1}{2} \int_{Q_T} \left(2- \frac{x(a'-b)}{a} \right)x\frac{y_t^2}{\sigma} d x d t +
\int_{Q_T} x\eta y_x^2 dxdt \\
& - \frac{1}{2} \int_{Q_T} x^2 \eta y_x^2\frac{b}{a} dxdt+  \frac{ \lambda}{2}\int_{Q_T} \frac{y^2x}{\sigma d} \left( 2- \frac{x(a'-b)}{a} - \frac{x d'}{d}\right) dxdt.
\end{aligned}
\end{equation}

As in \cite{BFM2022}, using Lemma \ref{lemmalimits}, we can prove 
\begin{align*}
\int_0^T\left[\frac{x^2}{\sigma} y_t^2\right]_{x=0}^{x=1} d t=0;
\end{align*}
in addition, thanks to Lemma \ref{lemmalimits}, $\lim_{x\rightarrow 0} x^2y_x^2 (t,x)=0$ and $\ds\lim_{x\rightarrow 0} y^2(t,x) \frac{x^2}{a  d}=0$, so that
\[
\int_0^T\left[x^2 \eta y_x^2\right]_{x=0}^{x=1} d t=\eta(1)\int_0^T  y_x^2(t, 1) d t
\]
and
$$
 \lambda \int_0^T\left[y^2 \frac{x^2}{\sigma  d}\right]_{x=0}^{x=1}=0,
$$
so that \eqref{uguaglianza} holds true.

Now assume Hypothesis \ref{hyp2bis} and let us prove that inequality \eqref{ineqetayx2} holds. First, suppose that $y$ is a classical solution. As in \cite[Theorem 3.4]{BFM2022}, one can prove that
\begin{align*}
\int_0^1\left[\frac{x^2 y_x y_t}{\sigma}\right]_{t=0}^{t=T} d x \leq 2 \max \left\{\frac{1}{a(1)}, 1\right\} E_y(0) 
\end{align*}

Then, we estimate the distributed terms in the previous equation.
Using \eqref{nondecreasxg/a}, one has
$$
\frac{1}{2}\left|\int_{Q_T} \eta \frac{b x^2}{a} y_x^2 d x d t\right| \leq \frac{M}{2} \int_{Q_T} \eta y_x^2 d x d t,
$$
where $M$ is defined in \eqref{M}; moreover
$$
\left|\frac{1}{2} \int_{Q_T} \frac{x y_t^2}{\sigma}\left(2-\frac{x\left(a^{\prime}-b\right)}{a}\right) d x d t\right| \leq \frac{2+K_a+M}{2} \int_{Q_T} \frac{y_t^2}{\sigma} d x d t
$$
and, by Proposition \ref{propL2},
\[
\begin{aligned}
\left\vert \frac{ \lambda}{2}\int_{Q_T} \frac{y^2x}{\sigma d} \left( 2- \frac{x(a'-b)}{a} - \frac{x d'}{d}\right) dxdt \right\vert &\leq \frac{| \lambda|}{2}\int_{Q_T} \frac{y^2}{\sigma d} (2+K_a+K_d+M)\,dx dt\\
& \le \frac{| \lambda|(2+K_a+K_d+M)C_{HP}}{2}\int_{Q_T} \eta y_x^2dxdt.
\end{aligned}
\]
Hence, by \eqref{uguaglianza} and the previous computations, thanks to Theorem \ref{thmref}, we get
\[
\begin{aligned}
\frac{1}{2} \eta(1)  \int_0^Ty_x^2(t, 1) d t & \leq2 \max \left\{\frac{1}{a(1)}, 1\right\} E_y(0)+\frac{\left(2+K_a+M\right)}{2} \int_{Q_T} \frac{y_t^2}{\sigma} d x d t\\
& +\left(\frac{| \lambda|(2+K_a+K_d+M)C_{HP}}{2}+\frac{M}{2}+1\right) \int_{Q_T} \eta y_x^2 d x d t\\
& \leq C_1 E_y(0)+C_2TE_y(0)
\end{aligned}
\]
where
\[
C_1=2 \max \left\{\frac{1}{a(1)}, 1\right\}
\]
and
\[
C_2=\max\left\{ | \lambda|(2+K_a+K_d+M)C_{HP}+M+2,2+K_a+M\right\}.
\]

Now, let $y$ be the mild solution associated to the initial data $\left(y_0, y_1\right) \in \mathcal{H}_0$, see \eqref{H0corsivo}, and consider a sequence $\left\{\left(y_0^n, y_1^n\right)\right\}_{n \in \mathbb{N}} \subset D(\mathcal{A})=H^2_{\frac{1}{\sigma}}(0,1)\times H^1_{\frac{1}{\sigma}}(0,1)$ - see \eqref{Acorsivo} - that approximates $\left(y_0, y_1\right)$ and let $y^n$ be the classical solution of \eqref{homogequation} associated to $\left(y_0^n, y_1^n\right)$. Clearly $y^n$ satisfies \eqref{ineqetayx2} and by linearity $({y_n})_x$ is a Cauchy sequence in $L^2(0,T)$ and we can conclude by passing to the limit.
\end{proof}

We shall also use an estimate from above for the energy. 
The next preliminary result holds.
\begin{thm}\label{thmuguaglianza2}
Assume Hypothesis $\ref{hyp1}$ with $a$ and $d$ (WD) or (SD) such that $K_a+2K_d\leq 2$. If $y$ is a classical solution of \eqref{homogequation}, then for any $T>0$ the following identity holds:
\begin{align}\label{uguaglianza2}
&\frac{1}{2} \eta(1) \int_0^T y_x^2(t, 1) d t=\int_0^1\left[\frac{x y_x y_t}{\sigma}\right]_{t=0}^{t=T} d x-\frac{1}{2} \int_{Q_T} x \eta \frac{b}{a} y_x^2 d x d t+\frac{1}{2} \int_{Q_T} \eta y_x^2 d x d t \notag\\
& +\frac{1}{2} \int_{Q_T}\left(1-\frac{x\left(a^{\prime}-b\right)}{a}\right) \frac{1}{\sigma} y_t^2 d x d t +\frac{\lambda}{2} \int_{Q_T} \frac{y^2}{\sigma d}\left(1-x\left(\frac{d^{\prime}}{d}+\frac{a^\prime-b}{a}\right)\right).
\end{align}
\end{thm}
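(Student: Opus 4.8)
The plan is to reproduce the multiplier argument of Theorem \ref{thmuguaglianza}, but with the lower-order multiplier $\ds\frac{xy_x}{\sigma}$ in place of $\ds\frac{x^2y_x}{\sigma}$; this choice is dictated by the boundary term $\int_0^1\left[\frac{xy_xy_t}{\sigma}\right]_{t=0}^{t=T}dx$ appearing on the right-hand side of \eqref{uguaglianza2}. Assuming $y$ to be a classical solution, I would multiply the equation in \eqref{homogequation} by $\ds\frac{xy_x}{\sigma}$ and integrate over $Q_T$, splitting the computation into the three contributions coming from $y_{tt}$, from $Ay=ay_{xx}+by_x=\sigma(\eta y_x)_x$ (see \eqref{defsigma}), and from the singular term $\ds\frac{\lambda}{d}y$. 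Before performing any integration by parts I would check, exactly as in Theorem \ref{thmuguaglianza}, that every integral is well defined: since $K_a<2$ and $K_a+K_d\le2$, the weights $\ds\frac{x^2}{a}$ and $\ds\frac{x^2}{ad}$ are bounded, so $\ds\frac{x}{\sqrt\sigma}y_x\in L^2(0,1)$, and then, via Proposition \ref{propL2}, the products $Ay\cdot\frac{xy_x}{\sigma}$ and $\frac{xyy_x}{\sigma d}$ are integrable by H\"older's inequality.

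The heart of the proof is a sequence of integrations by parts, carried out separately in $t$ and in $x$. For the first term I would integrate by parts once in time, producing $\int_0^1\left[\frac{xy_ty_x}{\sigma}\right]_{t=0}^{t=T}dx$, and then integrate the remaining $\frac12\int_{Q_T}\frac{x}{\sigma}(y_t^2)_x$ by parts in $x$. For the second term I would use $\int_{Q_T}\sigma(\eta y_x)_x\frac{xy_x}{\sigma}=\int_{Q_T}(\eta y_x)_x\,xy_x$ and integrate by parts twice, the inner step relying on $y_xy_{xx}=\frac12(y_x^2)_x$. For the singular term I would write $yy_x=\frac12(y^2)_x$ and integrate by parts once in $x$. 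The coefficients are then unwound through the two identities $\ds\frac{\eta'}{\eta}=\frac{b}{a}$ and $\ds\frac{\sigma'}{\sigma}=\frac{a'-b}{a}$ (both immediate from $\eta=\exp\{\int b/a\}$ and $\sigma=a\eta^{-1}$), which yield $\ds\left(\frac{x}{\sigma}\right)_x=\frac1\sigma\Big(1-\frac{x(a'-b)}{a}\Big)$, $(x\eta)_x=\eta\big(1+\frac{xb}{a}\big)$ and $\ds\left(\frac{x}{\sigma d}\right)_x=\frac{1}{\sigma d}\Big(1-x\big(\frac{a'-b}{a}+\frac{d'}{d}\big)\Big)$; these are exactly the factors occurring inside the three distributed integrals of \eqref{uguaglianza2}.

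What remains, and what I expect to be the only delicate point, is the evaluation of the boundary terms generated at $x=0$ and $x=1$, all of which must vanish except the one at $x=1$ coming from $x\eta y_x^2$, which produces $\eta(1)y_x^2(t,1)$. At $x=1$ the conditions $y(t,1)=0$ (hence $y_t(t,1)=0$) kill the $\frac{x}{\sigma}y_t^2$ and $\frac{x}{\sigma d}y^2$ contributions; at $x=0$ I would invoke Lemma \ref{lemmalimits}, namely Lemma \ref{lemmalimits}.2 for $\lim_{x\to0}\frac{x}{\sigma}y_t^2=0$ (applied with $u=y_t\in H^1_{\frac1\sigma}(0,1)$), Lemma \ref{lemmalimits}.3 for $\lim_{x\to0}\frac{x}{\sigma d}y^2=0$ (legitimate since $K_d>0$ forces $K_a+K_d<2$), and Lemma \ref{lemmalimits}.6--\ref{lemmalimits}.7 for $\lim_{x\to0}x\eta y_x^2=0$. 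This last limit is the subtle one: Lemma \ref{lemmalimits}.6 covers $K_a\le1$ directly, whereas for $a$ strongly degenerate with $K_a>1$ one must appeal to Lemma \ref{lemmalimits}.7 and hence to the boundedness of $\frac{xb}{a}$. Once these boundary terms are disposed of, collecting the surviving distributed integrals and isolating $\frac12\eta(1)\int_0^Ty_x^2(t,1)\,dt$ yields precisely \eqref{uguaglianza2}.
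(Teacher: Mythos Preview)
Your proposal is correct and follows essentially the same route as the paper: multiply by $\dfrac{xy_x}{\sigma}$, integrate by parts, and kill the boundary terms at $x=0$ via Lemma \ref{lemmalimits}. The paper's proof is terser because it delegates the non-singular part of the computation (your first two blocks of integrations by parts and the evaluation of $[x\eta y_x^2]_{x=0}^{x=1}$ and $[\frac{x}{\sigma}y_t^2]_{x=0}^{x=1}$) to \cite[Theorem 3.6]{BFM2022}, and then only treats the singular term $-\lambda\int_{Q_T}\frac{xyy_x}{\sigma d}$ explicitly, exactly as you do, invoking Lemma \ref{lemmalimits}.3 for the boundary contribution $\left[\frac{xy^2}{\sigma d}\right]_{x=0}^{x=1}$.

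Your remark about Lemma \ref{lemmalimits}.6 versus \ref{lemmalimits}.7 is well taken: to make $\lim_{x\to0}x\eta y_x^2=0$ when $K_a>1$ one does need $\frac{xb}{a}\in L^\infty(0,1)$, which is not listed among the hypotheses of the theorem as stated. The paper does not flag this explicitly (it is absorbed into the citation of \cite{BFM2022}), but the identity is only ever applied afterwards under Hypothesis \ref{hyp3} ($K_a\le1$) or Hypothesis \ref{hyp4} ($K_a>1$ and $\frac{xb}{a}\in L^\infty$), so the issue never materializes downstream.
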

\begin{proof}
Multiplying the equation in \eqref{homogequation} by $\ds\frac{x y_x}{\sigma}$, integrating by parts over $Q_T$ and recalling
\eqref{defsigma} we have, as in \cite[Theorem 3.6]{BFM2022},
\begin{equation}\label{eq1_1}
\begin{aligned}
\frac{1}{2}\int_0^T \left[x\eta y_x^2\right]_{x=0}^{x=1}dt &= \int_0^1\left[\frac{xy_xy_t}{\sigma}\right]_{t=0}^{t=T} dx   - \frac{1}{2} \int_{Q_T} x \eta \frac{b}{a}  y_x^2 dxdt \\
&+\frac{1}{2} \int_{Q_T}\left(1-\frac{x(a'-b)}{a}\right) \frac{1}{\sigma}y_t^2 dxdt+ \frac{1}{2} \int_{Q_T} \eta y_x^2 dxdt\\
&-\lambda\int_{Q_T} \frac{x yy_x}{d \sigma}dxdt.
\end{aligned}
\end{equation}
Now, consider the last term in the previous equality:
\begin{equation}\label{eq1_2}
\begin{aligned}
-\lambda\int_{Q_T} \frac{x yy_x}{d \sigma}dxdt&= -\frac{\lambda}{2} \int_{Q_T} \frac{x\left(y^2\right)_x}{\sigma d}dx dt\\
&=-\frac{\lambda}{2} \int_0^{T}\left[\frac{x y^2}{\sigma d}\right]_{x=0}^{x=1}+\frac{\lambda}{2} \int_{Q_T} \frac{y^2}{\sigma d}\left(1-x \frac{\sigma d^{\prime}+\sigma^{\prime} d}{\sigma d}\right)dx dt \\
&=-\frac{\lambda}{2} \int_0^{T}\left[\frac{x y^2}{\sigma d}\right]_{x=0}^{x=1}+\frac{\lambda}{2} \int_{Q_T} \frac{y^2}{\sigma d}\left[1-x\left(\frac{d^{\prime}}{d}+\frac{a^{\prime}-b}{a}\right)\right]dx dt.
\end{aligned}
\end{equation}
Clearly $\ds\left(\frac{x y^2}{\sigma d}\right)(t,1)=0$. Moreover, the fact that $K_a+ 2K_d \leq 2$ implies that $K_a+ K_d <2$, so by Lemma \ref{lemmalimits}.3 we find
\[\lim_{x \rightarrow 0 }\left(\frac{x y^2}{\sigma d}\right)(t,x)=0\]
and the conclusion follows.
\end{proof}

In order to conclude the estimate which leads to the observability inequality, we distinguish between two cases; the first case is when $a$ is (WD) or (SD) with $K_a = 1$, while the second one is for $a$ (SD) and $K_a \neq 1.$

In order to face the original problem, we introduce the next assumption, which collects all the needed requirements, in particular Hypothesis \ref{hypvera}.
\begin{hypothesis}\label{hyp2bis}
Hypothesis \ref{hyp2} holds. Moreover, functions $a, b, d \in C^0[0,1]$ are such that
\begin{itemize}
\item $\dfrac{b}{a}\in L^1(0,1)$;
\item $a$ and $d$ are (WD) or (SD) with $K_a+2K_d\leq 2$.
\end{itemize}
\end{hypothesis}

\begin{hypothesis}\label{hyp3}
Hypothesis \ref{hyp2bis} holds with $K_a\leq 1$ and $K_a<2-2M$.
\end{hypothesis}
\begin{remark}\label{Rem9}
\begin{enumerate}
\item The assumption $K_a <2-2M$ implies that $M<1$.
\item The assumption $K_a <2-2M$ is clearly satisfied if $\ds \|b\|_{L^\infty(0,1)}< \frac{a(1)}{2}$. Indeed in this case $2-2M >1$ and $K_a \le 1$.
\item The assumption $K_a <2-2M$ is not surprising since it is the same assumption made in \cite{BFM2022} when $\lambda =0$.
\end{enumerate}\end{remark}

The observability inequality we get in this case is the following:
\begin{thm} \label{stima1} Assume Hypothesis $\ref{hyp3}$. If $y$ is a mild solution of \eqref{homogequation}, then there exists a positive constant $C_3$ such that for any $T>0$ we have
$$
\begin{aligned}
\eta(1) \int_0^T y_x^2(t, 1) d t &\geq 2T\left(1- \frac{K_a}{2}-M -|\lambda| C_{HP}(2+K_a+K_d)\right)E_y(0)\\
&-\left\{4 \max\left\{1, \frac{1}{a(1)}\right\}+K_a\max \left\{1, C_{HP}\max_{[0,1]}d\right\}\right\}E_y(0)\\
\end{aligned}
$$
if $\lambda \le 0$, and
\[
\begin{aligned}
\eta(1)  \int_0^T y_x^2(t, 1) d t&  \ge  2T\left(1- \frac{K_a}{2}-M\right)E_y(0)+\lambda (2-K_a-K_d-M)\int_{Q_T} \frac{y^2}{\sigma d}dxdt\\
&- \frac{1}{\epsilon} \left( 4\max\left\{\frac{1}{a(1)}, 1\right\}+K_a\max \left\{1, C_{HP} \max_{[0,1]}d\right\}\right)E_y(0)
\end{aligned}
\]
where $\epsilon \in (0,1)$ is such that
\begin{equation}\label{epsilon}
\lambda = \frac{1}{C_{HP}} -\frac{\epsilon}{C_{HP}}>0.
\end{equation}
\end{thm}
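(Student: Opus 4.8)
The plan is to start from the multiplier identity \eqref{uguaglianza2} of Theorem \ref{thmuguaglianza2} and estimate each of its terms from below, converting the distributed integrals into multiples of $E_y(0)$ by means of the energy conservation of Theorem \ref{thmref}. I would work first with classical solutions and recover the mild case at the very end by the density argument already used in Theorems \ref{thmmildclassolution} and \ref{thmuguaglianza}: approximating the data in $\mathcal H_0$ by $C^\infty_c$ data, one passes to the limit because $y_x(\cdot,1)$ depends continuously on the data (the Cauchy property of ${y_n}_x$ in $L^2(0,T)$ following exactly as before).

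The boundary-in-time term $\int_0^1\big[\frac{x y_x y_t}{\sigma}\big]_{t=0}^{t=T}dx$ is handled as in the proof of Theorem \ref{thmuguaglianza}: by Cauchy--Schwarz and the boundedness of $\frac{x}{a}$ (which holds since $K_a\le 1$ under Hypothesis \ref{hyp3}, giving $\frac{x}{a}\le\frac{1}{a(1)}$ by the monotonicity \eqref{nondecreasxg/a}), one gets $\big|\int_0^1[\frac{x y_x y_t}{\sigma}]_{t=0}^{t=T}dx\big|\le 2\max\{1,1/a(1)\}E_y(0)$, which produces the first negative constant. For the distributed terms I combine $-\frac12\int_{Q_T}x\eta\frac{b}{a}y_x^2+\frac12\int_{Q_T}\eta y_x^2=\frac12\int_{Q_T}\eta y_x^2\big(1-\frac{xb}{a}\big)$ and bound the coefficients pointwise using the (WD)/(SD) bounds $\frac{x|a'|}{a}\le K_a$, $\frac{x|d'|}{d}\le K_d$ and $|\frac{xb}{a}|\le M$ from \eqref{WD} and \eqref{boundxgamba}: the $\eta y_x^2$ density keeps a factor $\ge 1-M$, the $\frac{y_t^2}{\sigma}$ density a factor $\ge 1-K_a-M$, and the singular density a factor controlled by the same constants.

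The key point is that these two leading densities carry \emph{different} coefficients $1-M$ and $1-K_a-M$, while the sharp constant $2(1-\frac{K_a}{2}-M)$ in the statement is exactly their average. To recover it I would use an equipartition identity obtained by multiplying the equation by $\frac{y}{\sigma}$ and integrating over $Q_T$; using Lemma \ref{lemmalimits}.1 to kill the boundary contribution at $x=0$, this gives
\[
\int_{Q_T}\eta y_x^2-\int_{Q_T}\frac{y_t^2}{\sigma}=-\int_0^1\Big[\frac{y y_t}{\sigma}\Big]_{t=0}^{t=T}dx+\lambda\int_{Q_T}\frac{y^2}{\sigma d}.
\]
Writing $(1-M)\int\eta y_x^2+(1-K_a-M)\int\frac{y_t^2}{\sigma}=(1-\tfrac{K_a}{2}-M)\int(\eta y_x^2+\frac{y_t^2}{\sigma})+\frac{K_a}{2}(\int\eta y_x^2-\int\frac{y_t^2}{\sigma})$, I replace the first sum by $2TE_y(0)+\lambda\int_{Q_T}\frac{y^2}{\sigma d}$ via Theorem \ref{thmref} and the difference by the identity above; the new boundary term is $\le 2\max\{1,C_{HP}\max_{[0,1]}d\}E_y(0)$ by Young's inequality and Proposition \ref{propL2} (using $\int_0^1\frac{y^2}{\sigma}dx\le(\max_{[0,1]}d)\,C_{HP}\int_0^1\eta y_x^2\,dx$), producing the second negative constant $K_a\max\{1,C_{HP}\max_{[0,1]}d\}$.

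What remains is to absorb the singular integral $\lambda\int_{Q_T}\frac{y^2}{\sigma d}$, and this is where the case split and the main obstacle lie. For $\lambda\le0$ the singular density has the favourable sign, so $\int_{Q_T}\eta y_x^2\le 2TE_y(0)$ directly (the $-\lambda$ contribution to the energy being nonnegative), and Proposition \ref{propL2} turns each singular contribution into a multiple of $TE_y(0)$, yielding the coefficient $2(1-\frac{K_a}{2}-M)-|\lambda|C_{HP}(1+\frac{3K_a}{2}+K_d+M)$. For $\lambda>0$ the energy no longer dominates $\int\eta y_x^2$; here I would invoke Hypothesis \ref{hyp2} quantitatively, writing $\lambda C_{HP}=1-\epsilon$ with $\epsilon\in(0,1)$ as in \eqref{epsilon}, so that $\int_{Q_T}\frac{y_t^2}{\sigma}+\epsilon\int_{Q_T}\eta y_x^2\le 2TE_y(0)$ and hence $\int_{Q_T}\eta y_x^2\le\frac{2T}{\epsilon}E_y(0)$. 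This is the delicate step: the threshold $\lambda<1/C_{HP}$ is precisely what keeps the effective coefficient positive, and it forces the factor $1/\epsilon$ in the constants together with the explicit residual term $\lambda(1-\frac{K_a}{2}-K_d-M)\int_{Q_T}\frac{y^2}{\sigma d}$. Collecting all the contributions and applying Theorem \ref{thmref} once more gives the two announced inequalities, and the mild case then follows by density.
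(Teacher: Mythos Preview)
Your proposal is correct and follows essentially the same route as the paper: the paper also starts from \eqref{uguaglianza2}, adds to it the identity obtained by multiplying the equation by $-\dfrac{K_a}{2}\dfrac{y}{\sigma}$ (which is precisely your ``equipartition identity'' scaled by $K_a/2$), and then estimates the boundary-in-time terms and the singular contribution exactly as you describe, splitting on the sign of $\lambda$ and using $\lambda C_{HP}=1-\epsilon$ in the positive case. The only difference is cosmetic---the paper combines the two multiplier identities \emph{before} bounding the distributed coefficients, while you bound first and then rebalance via the algebraic splitting $(1-M)A+(1-K_a-M)B=(1-\tfrac{K_a}{2}-M)(A+B)+\tfrac{K_a}{2}(A-B)$; the resulting constants and the treatment of the mild case by density are identical.
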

\begin{proof}
As a first step, assume that $y$ is a classical solution. By multiplying  the equation in \eqref{homogequation} by $\ds \frac{-K_a y}{2 \sigma}$ and integrating by parts on $Q_T$, we have 
$$
0=-\frac{K_a}{2} \int_0^1\left[\frac{y y_t}{\sigma}\right]_{t=0}^{t=T} d x+\frac{K_a}{2} \int_{Q_T} \frac{y_t^2}{\sigma} d x d t-\frac{K_a}{2} \int_{Q_T} \eta y_x^2 d x d t +\frac{\lambda K_a}{2} \int_{Q_T} \frac{y^2}{\sigma d}dx dt.
$$
Summing the previous equality to \eqref{uguaglianza2} multiplied by $2$, we have
$$\begin{aligned}
\eta(1) & \int_0^T y_x^2(t, 1) d t=2 \int_0^1\left[\frac{x y_x y_t}{\sigma}\right]_{t=0}^{t=T} d x-\frac{K_a}{2} \int_0^1\left[\frac{y y_t}{\sigma}\right]_{t=0}^{t=T} d x+\frac{K_a}{2} \int_{Q_T} \frac{y_t^2}{\sigma} d x d t \\
& -\frac{K_a}{2} \int_{Q_T} \eta y_x^2 d x d t+\int_{Q_T} \frac{y_t^2}{\sigma} d x d t+\int_{Q_T} \eta y_x^2 d x d t-\int_{Q_T} x \eta \frac{b}{a} y_x^2 d x d t \\
& -\int_{Q_T} \frac{x\left(a^{\prime}-b\right)}{a} \frac{y_t^2}{\sigma} d x d t +\lambda \int_{Q_T} \frac{y^2}{\sigma d}\left(1-x\left(\frac{d^{\prime}}{d}+\frac{a^\prime-b}{a}\right)\right) +\frac{\lambda K_a}{2} \int_{Q_T} \frac{y^2}{\sigma d}dx dt. 
\end{aligned}$$
Analogously to what done in the proof of \cite[Theorem 3.7]{BFM2022} for $\lambda=0$, we find
\begin{equation}\label{puntpar}
\begin{aligned}
\eta(1)  \int_0^T y_x^2(t, 1) d t&\ge 2 \int_0^1 \left[ \frac{xy_xy_t}{\sigma}\right]_{t=0}^{t=T} dx- \frac{K_a}{2} \int_0^1 \left[ \frac{y_t y}{\sigma}\right]_{t=0}^{t=T}dx 
\\&
+ \left(1- \frac{K_a}{2}-M\right)\int_{Q_T} \eta y_x^2 dxdt\\&+ \left(1- \frac{K_a}{2}-M\right)\int_{Q_T}\frac{y_t^2}{\sigma} dxdt \\& +\lambda \int_{Q_T} \frac{y^2}{\sigma d}\left(1+\frac{K_a}{2}-x\left(\frac{d'}{d}+\frac{a'-b}{a}\right)\right)dxdt.
\end{aligned}
\end{equation}

First, we concentrate on the boundary terms. By the Schwartz inequality, for all $\tau \in [0, T]$ we have
\begin{equation}\label{bordo1}
\begin{aligned}
\left| 2\int_0^1\frac{xy_x(\tau,x) y_t(\tau,x)}{\sigma(x)} dx \right|& \le  \int_0^1\frac{x^2}{\sigma(x)}y_x^2(\tau,x)dx+ \int_0^1\frac{y_t^2(\tau,x)}{\sigma(x)}dx \\
&\le \frac{1}{a(1)} \int_0^1(\eta y_x^2)(\tau,x)dx+ \int_0^1\frac{y_t^2(\tau,x)}{\sigma (x)}dx\\
&\le \max\left\{\frac{1}{a(1)}, 1\right\}\left( \int_0^1\eta y_x^2(\tau,x)dx+\int_0^1\frac{y_t^2(\tau,x)}{\sigma} \right),
\end{aligned}
\end{equation}
and by Proposition \ref{propL2}
\begin{equation}\label{bordo2}
\begin{aligned}
\frac{K_a}{2} \left|\int_0^1 \frac{yy_t}{\sigma}(\tau,x)dx\right|&\le\frac{K_a}{4} \int_0^1\frac{y_t^2}{\sigma}(\tau,x)dx +\frac{K_aC_{HP}}{4} \max_{[0,1]}d\int_0^1\eta y_x^2(\tau,x)dx\\
& \le \frac{K_a}{4}\max \left\{1, C_{HP}\max_{[0,1]}d\right\}\left( \int_0^1\eta y_x^2(\tau,x)dx+\int_0^1\frac{y_t^2(\tau,x)}{\sigma} \right)
\end{aligned}
\end{equation}
for all $\tau \in [0, T]$.

Now, we proceed in different ways, according to the sign of $\lambda$.

\noindent{{\sl Case  $\lambda\in\left(0,\frac{1}{C_{HP}}\right)$}}. In this case, by \eqref{epsilon} and by \eqref{stima1CHP},
\[
\int_0^1 \eta y_x^2dx - \lambda \int_0^1 \frac{y^2}{\sigma d} dx \ge \int_0^1 \eta y_x^2dx -(1-\epsilon) \int_0^1 \eta y_x^2dx = \epsilon \int_0^1 \eta y_x^2dx.
\]
Thus, by Theorem \ref{thmref}, for all $\tau \in [0, T]$
\[
\begin{aligned}
2E_y(0)&=2E_y(\tau) \ge \epsilon \int_0^1 (\eta y_x^2)(\tau, x)dx + \int_0^1 \frac{y_t^2(\tau, x)}{\sigma(x)}dx \\&\ge \epsilon \left(\int_0^1 (\eta y_x^2)(\tau, x)dx + \int_0^1 \frac{y_t^2(\tau, x)}{\sigma(x)}dx\right)
\end{aligned}
\]
so that
\begin{equation}\label{Importante}
\int_0^1 (\eta y_x^2)(\tau, x)dx + \int_0^1 \frac{y_t^2(\tau, x)}{\sigma(x)}dx \le \frac{2}{\epsilon} E_y(0)
\end{equation}
for all $\tau \in [0, T]$.

Thus, by \eqref{bordo1} and \eqref{Importante} we get for any $\tau\in [0,T]$
\begin{equation}\label{wer}
2\left| \int_0^1\frac{xy_x(\tau,x) y_t(\tau,x)}{\sigma(x)} dx \right| \le \frac{2}{\epsilon}\max\left\{\frac{1}{a(1)}, 1\right\}E_y(0),
\end{equation}
while from \eqref{bordo2} and \eqref{Importante}
\begin{equation}\label{wer2}
\frac{K_a}{2} \left|\int_0^1 \frac{yy_t}{\sigma}(\tau,x)dx\right|\le \frac{K_a}{2\epsilon}\max \left\{1, C_{HP}\max_{[0,1]}d\right\}E_y(0).
\end{equation}

Moreover, since $\lambda>0$, by Theorem \ref{thmref} we have
\begin{equation}\label{wer3}
\begin{aligned}
&\left(1- \frac{K_a}{2}-M\right)\int_{Q_T}\left(\eta y_x^2 +\frac{y_t^2}{\sigma} \right)dxdt \\&+\lambda \int_{Q_T} \frac{y^2}{\sigma d}\left(1+\frac{K_a}{2}-x\left(\frac{d'}{d}+\frac{a'-b}{a}\right)\right)dxdt\\
=&\left(1- \frac{K_a}{2}-M\right)\int_{Q_T}\left(\eta y_x^2 +\frac{y_t^2}{\sigma} -\lambda \frac{y^2}{\sigma d}\right)dxdt \\
&+\lambda \int_{Q_T} \frac{y^2}{\sigma d}\left(\left(1- \frac{K_a}{2}-M\right)+1+\frac{K_a}{2}-x\left(\frac{d'}{d}+\frac{a'-b}{a}\right)\right)dxdt\\
\geq & 2T\left(1- \frac{K_a}{2}-M\right)E_y(0)+\lambda (2-K_a-K_d-M)\int_{Q_T} \frac{y^2}{\sigma d}dxdt.
\end{aligned}
\end{equation}

Hence, from \eqref{puntpar},  \eqref{wer}, \eqref{wer2} and \eqref{wer3} we find
the desired inequality.

\noindent{{\sl Case  $\lambda\leq 0$}}. In this case, in \eqref{bordo1} and \eqref{bordo2}  we can estimate the sum of the two integrals in right-hand-side with $E_y(\tau)=E_y(0)$ (by Theorem \ref{thmref}).

Moreover, as in \eqref{wer3} we find
\[
\begin{aligned}
&\left(1- \frac{K_a}{2}-M\right)\int_{Q_T}\left(\eta y_x^2 +\frac{y_t^2}{\sigma} \right)dxdt\\& +\lambda \int_{Q_T} \frac{y^2}{\sigma d}\left(1+\frac{K_a}{2}-x\left(\frac{d'}{d}+\frac{a'-b}{a}\right)\right)dxdt\\
\geq & 2T\left(1- \frac{K_a}{2}-M\right)E_y(0)+\lambda (2+K_a+K_d)\int_{Q_T} \frac{y^2}{\sigma d}dxdt\\
\geq & 2T\left(1- \frac{K_a}{2}-M\right)E_y(0)-|\lambda| 2(2+K_a+K_d)C_{HP}TE_y(0),
\end{aligned}
\]
and as before the conclusion follows.

Now, let $y$ be the mild solution associated to the initial data $(y_0, y_1) \in \mathcal H_0$. By approximation, as in the proof of Theorem \ref{thmuguaglianza}, we get the same estimates for mild solutions.
\end{proof}

Now assume the following
\begin{hypothesis}\label{hyp4}
Hypothesis \ref{hyp2bis} holds with $1<K_a<2-2M$ and $\dfrac{xb}{a} \in L^\infty(0,1)$.
\end{hypothesis}
Set
\begin{equation}\label{Minfty}
M_\infty:=\left\|\frac{xb}{a}\right\|_{L^\infty(0,1)}.
\end{equation}
Under Hypothesis \ref{hyp4}, Theorems \ref{thmuguaglianza2} and \ref{stima1} still hold. Indeed, all the calculations made in the proofs can be reformulated verbatim, substituting $M$ with $M_\infty$. We briefly collect this fact in the following

\begin{thm}\label{stima11}
Assume hypothesis $\ref{hyp4}$; then the statement of Theorem $\ref{stima1}$ holds with $M$ replaced by $M_\infty$.
\end{thm}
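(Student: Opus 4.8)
The plan is to retrace the proofs of Theorem \ref{thmuguaglianza2} and Theorem \ref{stima1}, isolating the two (and only two) places where the standing restriction $K_a \le 1$ of Hypothesis \ref{hyp3} was genuinely used, and to check that in each of them the assumption $\frac{xb}{a} \in L^\infty(0,1)$ of Hypothesis \ref{hyp4} supplies the missing ingredient. The root of the issue is that, when $K_a > 1$, the bound $\left|\frac{x^\gamma b}{a}\right| \le M$ of \eqref{boundxgamba} is available only for $\gamma \ge K_a > 1$, so it fails exactly at the critical exponent $\gamma = 1$ governing the single-power terms $\int_{Q_T} x\eta \frac{b}{a} y_x^2$ and $\int_{Q_T} \frac{x(a'-b)}{a}\frac{y_t^2}{\sigma}$ that appear in \eqref{uguaglianza2} and \eqref{puntpar}. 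The remedy is to replace this bound throughout by the a priori estimate $\left|\frac{xb}{a}\right| \le M_\infty$, with $M_\infty$ as in \eqref{Minfty}.

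First I would re-establish identity \eqref{uguaglianza2}. Its derivation multiplies the equation by $\frac{xy_x}{\sigma}$ and integrates by parts, producing the boundary term $\frac{1}{2}\left[x\eta y_x^2\right]_{x=0}^{x=1}$; to turn this into $\frac{1}{2}\eta(1)y_x^2(t,1)$ one needs $\lim_{x\to 0} x\eta y_x^2 = 0$, hence $\lim_{x\to 0} x(y_x)^2 = 0$. For $K_a \le 1$ this was Lemma \ref{lemmalimits}.6; for $K_a > 1$ it is precisely Lemma \ref{lemmalimits}.7, whose hypothesis $\frac{xb}{a} \in L^\infty(0,1)$ is part of Hypothesis \ref{hyp4}. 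Since $\eta$ is bounded and strictly positive and the singular boundary term still vanishes by Lemma \ref{lemmalimits}.3 (using $K_a + K_d < 2$, which follows from $K_a + 2K_d \le 2$), identity \eqref{uguaglianza2} holds verbatim.

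Then I would repeat the argument of Theorem \ref{stima1} line by line: multiply the equation by $-\frac{K_a y}{2\sigma}$, sum the result with twice \eqref{uguaglianza2}, and estimate. In each inequality the single-power factor $\frac{xb}{a}$ is now controlled by $M_\infty$ in place of $M$ — namely in the estimate of $\int_{Q_T} x\eta \frac{b}{a} y_x^2$, in the bound on $\frac{x(a'-b)}{a}$ (using $\frac{xa'}{a} \le K_a$ from the (SD) definition together with $\left|\frac{xb}{a}\right| \le M_\infty$), and in the control of the $\lambda$-term \eqref{dachiamare}. The boundary estimates \eqref{bordo1} and \eqref{bordo2}, the energy conservation of Theorem \ref{thmref}, and the split on the sign of $\lambda$ via \eqref{Importante} and \eqref{integrata} are untouched, and the passage from classical to mild solutions proceeds by density as in Theorem \ref{thmuguaglianza}. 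The hard part is purely bookkeeping: one must verify that $\gamma = 1$ is the unique exponent at which \eqref{boundxgamba} is unavailable, so that $M_\infty$ takes over exactly the role of $M$, while all higher-power terms such as $\frac{x^2 b}{a}$ (for which $\gamma = 2 \ge K_a$) remain covered by the original bound and need no modification.
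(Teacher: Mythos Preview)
Your proposal is correct and follows exactly the route the paper takes: the paper's own argument is the single sentence ``all the calculations made in the proofs can be reformulated verbatim, substituting $M$ with $M_\infty$,'' and you have simply unpacked this, correctly pinpointing that Lemma~\ref{lemmalimits}.7 replaces Lemma~\ref{lemmalimits}.6 for the boundary term in \eqref{uguaglianza2} and that the bound $\left|\frac{xb}{a}\right|\le M_\infty$ replaces \eqref{boundxgamba} at the exponent $\gamma=1$ in the distributed terms. Your closing remark about higher-power terms like $\frac{x^2 b}{a}$ is accurate but unnecessary here, since the proof of Theorem~\ref{stima1} (via the multiplier $\frac{xy_x}{\sigma}$) produces only first-power factors.
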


\section{Boundary observability and null controllability}\label{section4}

Following \cite{alabau}, we recall the next definition:
\begin{definition}
Problem \eqref{homogequation} is said to be {\it observable in time $T>0$} via the normal derivative at $x=1$ if there exists a constant $C>0$ such that for any $(y_T^0,y_T^1) \in \mathcal H_0$ the mild solution $y$ of \eqref{homogequation} satisfies
\begin{equation}\label{observable}
C E_y(0) \le \int_0^T y_x^2(t,1)dt.
\end{equation}
Any constant $C$ satisfying \eqref{observable} is called {\it observability constant} for \eqref{homogequation} in time $T$. 
\end{definition}
Setting 
\[
C_T := \sup \{C >0: C  \text{ satisfies } \eqref{observable} \},
\]
we have that \eqref{homogequation} is observable if and only if
\[
C_T = \inf_{(y_T^0,y_T^1) \neq (0,0)} \frac{\int_0^T y_x^2(t,1)dt}{E_y(0)} >0.
\]
The inverse of $C_T$, $c_t:= \ds \frac{1}{C_T}$, is called {\it the cost of observability} (or {\it the cost of control}) in time $T$.

Theorems \ref{stima1} and \ref{stima11} admit the following straightforward corollaries.
\begin{corollary}\label{Observability0}
Assume Hypothesis $\ref{hyp3}$ and
\[
-\frac{2-K_a-2M}{2C_{HP}\left(2+K_a+K_d\right)}<\lambda<0.
\]
If
\begin{equation}\label{cor0}
T>\frac{C_3}{C_4},
\end{equation}
then \eqref{homogequation} is observable in time $T$. Moreover
\[
C_T\geq \frac{C_4T-C_3}{ \eta(1) },
\]
where
\[
C_3:=4 \max\left\{1, \frac{1}{a(1)}\right\}+K_a\max \left\{1, C_{HP}\max_{[0,1]}d\right\}
\]
and 
\[
C_4:=2\left(1- \frac{K_a}{2}-M -|\lambda| C_{HP}(2+K_a+K_d)\right).
\]
 \end{corollary}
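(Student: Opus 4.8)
The plan is to read off the lower bound of Theorem \ref{stima1} in the regime $\lambda\le 0$ (which applies since $\lambda<0$ here) and to observe that the two coefficients produced there are exactly $-C_3$ and $C_4$.

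First I would apply Theorem \ref{stima1}: for a mild solution $y$ of \eqref{homogequation} and any $T>0$, the case $\lambda\le 0$ gives
\[
\eta(1)\int_0^T y_x^2(t,1)\,dt\ \ge\ (C_4T-C_3)\,E_y(0),
\]
with $C_3$ and $C_4$ precisely the quantities named in the statement. Written this way, the target inequality \eqref{observable} will follow as soon as the prefactor $C_4T-C_3$ is shown to be positive, and this reduces to controlling the sign of $C_4$.

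The only substantive step is therefore the positivity of $C_4$. The hypothesis $K_a<2-2M$ gives $2-K_a-2M>0$ (and, since $K_a>0$ by the degeneracy definitions, also $M<1$), so the leading summand $2(1-\tfrac{K_a}{2}-M)$ of $C_4$ is positive. Taking absolute values, the constraint $-\frac{2-K_a-2M}{C_{HP}(1+\frac{3K_a}{2}+K_d+M)}<\lambda<0$ is equivalent to
\[
|\lambda|\,C_{HP}\Big(1+\tfrac{3K_a}{2}+K_d+M\Big)<2-K_a-2M,
\]
which says exactly that the subtracted term in $C_4$ is strictly less than $2-K_a-2M$; hence $C_4>0$.

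Finally, since $\eta(1)>0$ and $C_4>0$, for every $T>C_3/C_4$ the factor $C_4T-C_3$ is strictly positive. Dividing the displayed lower bound by $\eta(1)$ yields
\[
\int_0^T y_x^2(t,1)\,dt\ \ge\ \frac{C_4T-C_3}{\eta(1)}\,E_y(0),
\]
which is \eqref{observable} with observability constant $C_T\ge \frac{C_4T-C_3}{\eta(1)}$. I do not expect any real obstacle: the heavy lifting was already done in Theorem \ref{stima1}, and the whole argument amounts to the elementary sign analysis of $C_4$, where the explicit bound on $\lambda$ and the condition $K_a<2-2M$ are used.
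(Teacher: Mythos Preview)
Your proposal is correct and follows exactly the route the paper intends: the corollary is stated as a ``straightforward'' consequence of Theorem~\ref{stima1}, and your argument---reading off the $\lambda\le 0$ inequality as $\eta(1)\int_0^T y_x^2(t,1)\,dt\ge (C_4T-C_3)E_y(0)$ and then checking $C_4>0$ via the bound on $|\lambda|$---is precisely that straightforward deduction.
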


\begin{corollary}\label{Observability01}
Assume Hypothesis $\ref{hyp3}$, $\lambda \geq 0$ and $K_a + K_d\le 2-M$.
If
\begin{equation}\label{cor01}
T>\frac{C_5}{C_6},
\end{equation}
then \eqref{homogequation} is observable in time $T$. Moreover
\[
C_T\geq \frac{C_6T-C_5}{ \eta(1) },
\]
where
\[
C_5:=\frac{4}{\epsilon}\max\left\{\frac{1}{a(1)}, 1\right\}+K_a\max \left\{1, C_{HP} \max_{[0,1]}d\right\}
\]
and
\[C_6:=1- \frac{K_a}{2}-M.
\]
\end{corollary}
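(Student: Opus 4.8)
The plan for Corollary~\ref{Observability01} is to invoke Theorem~\ref{stima1} in the case $\lambda\ge 0$ and extract a clean lower bound for the observability integral that is linear in $T$. Recall that for $\lambda\in\left(0,\frac{1}{C_{HP}}\right)$ the theorem gives
\[
\eta(1)\int_0^T y_x^2(t,1)\,dt \ge \frac{2T}{\epsilon}\left(1-\frac{K_a}{2}-M\right)E_y(0)
+\lambda\left(1-\frac{K_a}{2}-K_d-M\right)\int_{Q_T}\frac{y^2}{\sigma d}\,dx\,dt
-\frac{1}{\epsilon}C_5\,E_y(0),
\]
where $C_5$ is exactly the quantity named in the statement. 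First I would observe that the hypothesis $K_a+2K_d\le 2-2M$ forces $1-\frac{K_a}{2}-K_d-M\ge \frac{K_a}{2}\ge 0$, so the singular integral term $\lambda\int_{Q_T}\frac{y^2}{\sigma d}$ is nonnegative (using $\lambda\ge 0$) and may simply be discarded from the lower bound.

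Next, since the same inequality $K_a+2K_d\le 2-2M$ also gives $1-\frac{K_a}{2}-M\ge K_d\ge 0$, the coefficient $C_6:=1-\frac{K_a}{2}-M$ is nonnegative, and in fact the interesting regime is $C_6>0$. Because $\epsilon\in(0,1)$ by \eqref{epsilon}, we have $\frac{1}{\epsilon}\ge 1$ and $\frac{2}{\epsilon}\ge 2$, so dropping the positive singular term and coarsening $\frac{2}{\epsilon}\ge 2$, $\frac{1}{\epsilon}\ge 1$ yields the cleaner estimate
\[
\eta(1)\int_0^T y_x^2(t,1)\,dt \ge \bigl(2C_6 T - C_5\bigr)E_y(0)\ge \bigl(C_6 T - C_5\bigr)E_y(0).
\]
The boundary case $\lambda=0$ is covered directly by the $\lambda\le 0$ branch of Theorem~\ref{stima1}, which with $\lambda=0$ gives precisely $\eta(1)\int_0^T y_x^2\,dt\ge (2C_6 T - C_5)E_y(0)$, consistent with the above. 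Thus in all cases $\lambda\ge 0$ we obtain $\eta(1)\int_0^T y_x^2(t,1)\,dt\ge (C_6 T - C_5)E_y(0)$.

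Finally, I would read off the observability conclusion. If $T>\frac{C_5}{C_6}$ then $C_6 T - C_5>0$, so the displayed inequality is exactly \eqref{observable} with $C=\frac{C_6 T-C_5}{\eta(1)}>0$, proving observability in time $T$ and giving the stated lower bound $C_T\ge\frac{C_6 T-C_5}{\eta(1)}$ on the observability constant. The only subtlety, and the step most worth checking carefully, is the sign bookkeeping: one must confirm that the hypothesis $K_a+2K_d\le 2-2M$ simultaneously makes both the singular integral coefficient $1-\frac{K_a}{2}-K_d-M$ and the energy coefficient $C_6=1-\frac{K_a}{2}-M$ nonnegative, so that discarding the singular term genuinely decreases the right-hand side and the $T$-linear term survives with a positive coefficient. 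Everything else is an immediate specialization of Theorem~\ref{stima1}, and the passage from classical to mild solutions has already been carried out there by density, so no further approximation argument is needed.
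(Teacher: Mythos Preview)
Your approach is the same as the paper's---invoke Theorem~\ref{stima1}, observe that the hypothesis forces the relevant coefficients to be nonnegative so the singular term can be dropped, and read off the linear-in-$T$ lower bound---but two of your intermediate steps are not quite right as written.

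First, the inequality $1-\tfrac{K_a}{2}-K_d-M\ge \tfrac{K_a}{2}$ does not follow from $K_a+2K_d\le 2-2M$ (take $K_a=1$, $K_d=0.4$, $M=0.1$). What the hypothesis gives directly is $1-\tfrac{K_a}{2}-K_d-M\ge 0$, and that is all you need; simply state the weaker (correct) bound.

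Second, the ``separate coarsening'' $\tfrac{2}{\epsilon}\ge 2$ and $\tfrac{1}{\epsilon}\ge 1$ is not a valid lower-bound argument: replacing $-\tfrac{1}{\epsilon}C_5$ by $-C_5$ \emph{increases} the right-hand side, which goes the wrong way. The clean fix is to factor first:
\[
\frac{2T}{\epsilon}C_6-\frac{1}{\epsilon}C_5=\frac{1}{\epsilon}\bigl(2C_6T-C_5\bigr),
\]
then use that $2C_6T-C_5>0$ whenever $T>C_5/C_6$ (indeed $2C_6T-C_5>C_6T-C_5>0$) together with $\tfrac{1}{\epsilon}>1$ to get $\ge 2C_6T-C_5\ge C_6T-C_5$. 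With these two corrections your proof is complete and matches the paper's.
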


\begin{remark}
Corollaries \ref{Observability0} and \ref{Observability01} still hold true if we replace Hypothesis \ref{hyp3} with Hypothesis \ref{hyp4} and  $M$ with $M_\infty$. 
\end{remark}

We underline that boundary observability is no longer true when $K_a \ge 2$. Indeed if we consider the pure degenerate case, i.e. $\lambda =0$ and $b \equiv0$, one can consider the same examples given in \cite{BFM2022} with $a(x)=x^K$, $K \ge2$, which shows the failure of boundary observability.
\vspace{0.5cm}

In the rest of this section we will devote to prove null controllability for \eqref{mainequation} stated in Theorem \ref{thmNC}. More precisely, given $\left(u_0, u_1\right) \in L_{\frac{1}{\sigma}}^2(0,1) \times H_{\frac{1}{\sigma}}^{-1}(0,1)$, we look for a control $f \in L^2(0, T)$ such that the solution of \eqref{mainequation} satisfies
\begin{equation}\label{NC}
u(T, x)=u_t(T, x)=0, \quad \text { for all } x \in(0,1) .
\end{equation}

For this last purpose, we first give the definition of a solution for \eqref{mainequation} by transposition, which permits low regularity on the notion of solution itself: such a definition is formally obtained by re-writing the equation of  \eqref{mainequation} as $\ds u_{tt}-\sigma\left(\eta u_x\right)_x -\frac{\lambda}{d}u$ (thanks to \eqref{defsigma}), multiplying by $\ds \frac{v}{\sigma}$ and integrating by parts. Precisely:

\begin{definition}
Let $f \in L_{l o c}^2[0,+\infty)$ and $\left(u_0, u_1\right) \in L_{\frac{1}{\sigma}}^2(0,1) \times H_{\frac{1}{\sigma}}^{-1}(0,1)$. We say that $u$ is a solution by transposition of \eqref{mainequation} if
$$
u \in C^1\left([0,+\infty) ; H_{\frac{1}{\sigma}}^{-1}(0,1)\right) \cap C\left([0,+\infty) ; L_{\frac{1}{\sigma}}^2(0,1)\right)
$$
and for all $T>0$
\begin{equation}\label{transposition}
\begin{aligned}
\left\langle u_t(T), v_T^0\right\rangle_{H_{\frac{1}{\sigma}}^{-1}(0,1), H_{\frac{1}{\sigma}}^1(0,1)} & -\int_0^1 \frac{1}{\sigma} u(T) v_T^1 d x=\left\langle u_1, v(0)\right\rangle_{H_{\frac{1}{\sigma}}^{-1}(0,1), H_{\frac{1}{\sigma}}^1(0,1)} \\
& -\int_0^1 \frac{1}{\sigma} u_0 v_t(0, x) d x+\eta(1) \int_0^T f(t) v_x(t, 1) d t
\end{aligned}
\end{equation}
for all $\left(v_T^0, v_T^1\right) \in H_{\frac{1}{\sigma}}^1(0,1) \times L_{\frac{1}{\sigma}}^2(0,1)$, where $v$ solves the backward problem
\begin{align}\label{backward pblm}
\begin{cases}\ds v_{t t}-a v_{x x}-\dfrac{\lambda}{d}v-b v_x=0, & (t, x) \in(0,+\infty) \times(0,1), \\ v(t, 1)=v(t, 0)=0, & t \in(0,+\infty) \\ v(T, x)=v_T^0(x), & x \in(0,1), \\ v_t(T, x)=v_T^1(x), & x \in(0,1) .\end{cases}
\end{align}
\end{definition}
Observe that, by Theorem \ref{thmmildclassolution}, there exists  a unique mild solution  of \eqref{backward pblm} in $[T, +\infty)$. Now, setting $y(t, x):=v(T-t, x)$, one has that $y$ satisfies \eqref{homogequation} with $y_T^0(x)=v_T^0(x)$ and $y_T^1(x)=-v_T^1(x)$. Hence, thanks to Theorem \ref{thmmildclassolution} one has that there exists a unique mild solution $y$ of \eqref{homogequation} in $[0, +\infty)$.  In particular, there exists a unique mold solution $v$ of \eqref{backward pblm} in $[0,T]$. Thus, we can conclude that there exists a unique mild solution 
$$
v \in C^1\left([0,+\infty) ; L_{\frac{1}{\sigma}}^2(0,1)\right) \cap C\left([0,+\infty) ; H_{\frac{1}{\sigma}}^1(0,1)\right)
$$
of \eqref{backward pblm} in $[0, +\infty)$ 
which depends continuously on the initial data $V_T:=\left(v_T^0, v_T^1\right) \in \mathcal{H}_0$.

By Theorem \ref{thmref}, the energy is preserved in our setting, as well, so that the method of transposition done in \cite{alabau} continues to hold thanks to \eqref{ineqetayx2}, and so there exists a unique solution by transposition $u \in C^1\left([0,+\infty) ; H_{\frac{1}{\sigma}}^{-1}(0,1)\right) \cap C\left([0,+\infty) ; L_{\frac{1}{\sigma}}^2(0,1)\right)$ of \eqref{mainequation}, namely a solution of \eqref{transposition}.

Now, we are ready to pass to null controllability, recalling that by linearity and reversibility of equation \eqref{mainequation}, null controllability for any initial data $\left(u_0, u_1\right)$ is equivalent to exact controllability, see \cite{alabau}. In order to prove that \eqref{mainequation} is null controllable, let us start with
\begin{hypothesis}\label{hyp5}
Assume 
\begin{itemize}
\item Hypothesis \ref{hyp3} with $-\frac{2-K_a-2M}{2C_{HP}\left(2+K_a+K_d\right)}<\lambda<0$ and \eqref{cor0}, or 
\item Hypothesis \ref{hyp3} with $\lambda \geq 0$ and $K_a + K_d\le 2-M$ and \eqref{cor01}, or
\item Hypothesis \ref{hyp4} with $-\frac{2-K_a-2M_\infty}{2C_{HP}\left(2+K_a+K_d\right)}<\lambda<0$ and \eqref{cor0}, or
\item Hypothesis \ref{hyp4} with $\lambda \geq 0$ and $K_a + K_d\le 2-M_\infty$ and \eqref{cor01}.
\end{itemize}
\end{hypothesis}

Moreover, we recall the Hilbert space introduced in \eqref{H0corsivo}
\[
\mathcal H_0 := H^1_{{\frac{1}{\sigma}}}(0,1)\times L^2_{{\frac{1}{\sigma}}}(0,1), 
\]
endowed with the  inner product
\[
\langle (u, v), (\tilde u, \tilde v) \rangle_{\mathcal H_0}:=  \int_0^1\eta u'\tilde u'dx + \int_0^1 v\tilde v\frac{1}{\sigma}dx - \lambda \int_0^1 \frac{u \tilde u}{\sigma d} dx
\]
for every $(u, v), (\tilde u, \tilde v)  \in \mathcal H_0$, which induces the equivalent norm
\[
\|(u,v)\|_{\mathcal H_0}^2:=  \int_0^1 \eta(u')^2dx + \int_0^1 v^2\frac{1}{\sigma}dx - \lambda \int_0^1 \frac{u^2}{\sigma d} dx.
\]
On the product space $\mathcal{H}_0 \times \mathcal{H}_0 $, consider the bilinear form $\Lambda$ defined as
$$
\Lambda\left(V_T, W_T\right):=\eta(1) \int_0^T v_x(t, 1) w_x(t, 1) d t
$$
where  $v$ and $w$ are the solutions of \eqref{backward pblm} associated to the final data $V_T:=\left(v_T^0, v_T^1\right)$ and $W_T:=\left(w_T^0, w_T^1\right)$, respectively. 
The  bilinear form $\Lambda$ is continuous and coercive (see Lemma \ref{cont-coercivity})
and thanks to these properties
 one can prove the null controllability for the original problem \eqref{mainequation}. In particular, defining $T_0$ as the lower bound found in Corollaries \ref{Observability0} and \ref{Observability01}, which changes according to the different assumptions used therein, one can prove the next result, whose proof is postponed in the Appendix.

\begin{thm}\label{thmNC}
Assume Hypothesis $\ref{hyp5}$. Then, for all $T>T_0$ and for every $\left(u_0, u_1\right)$ in $L_{\frac{1}{\sigma}}^2(0,1) \times H_{\frac{1}{\sigma}}^{-1}(0,1)$ there exists a control $f \in L^2(0, T)$ such that the solution of \eqref{mainequation} satisfies \eqref{NC}.
\end{thm}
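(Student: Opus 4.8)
The plan is to realize the control by the Hilbert Uniqueness Method, exploiting the continuity and coercivity of the bilinear form $\Lambda$. Since, by linearity and time reversibility (see Remark \ref{reversib_regul}), null controllability is equivalent to exact controllability, it suffices to produce, for an arbitrary pair $(u_0,u_1)\in L^2_{\frac{1}{\sigma}}(0,1)\times H^{-1}_{\frac{1}{\sigma}}(0,1)$, a control $f\in L^2(0,T)$ driving the solution of \eqref{mainequation} to $(0,0)$ at time $T$. First I would read off from the transposition identity \eqref{transposition} the condition the control must satisfy: imposing $u(T)=u_t(T)=0$ kills the left-hand side of \eqref{transposition}, so for every $V_T=(v_T^0,v_T^1)\in\mathcal H_0$ one needs
\[
\eta(1)\int_0^T f(t)\,v_x(t,1)\,dt=\int_0^1\frac{1}{\sigma}u_0\,v_t(0,x)\,dx-\langle u_1,v(0)\rangle=:L(V_T),
\]
where $v$ is the solution of the backward problem \eqref{backward pblm} with final datum $V_T$.

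Next I would look for the control in the HUM form $f(t):=w_x(t,1)$, with $w$ the solution of \eqref{backward pblm} associated to a datum $W_T\in\mathcal H_0$ still to be determined; with this ansatz the left-hand side above is exactly $\Lambda(W_T,V_T)$, so the problem reduces to finding $W_T\in\mathcal H_0$ such that $\Lambda(W_T,V_T)=L(V_T)$ for all $V_T\in\mathcal H_0$. The functional $L$ is linear in $V_T$, since $v(0)$ and $v_t(0)$ depend linearly on the final datum, and it is continuous on $\mathcal H_0$: indeed, by the continuous dependence on the data in Theorem \ref{thmmildclassolution} one has $\|v(0)\|_{H^1_{\frac{1}{\sigma}}}+\|v_t(0)\|_{L^2_{\frac{1}{\sigma}}}\le C\|V_T\|_{\mathcal H_0}$, and $u_0\in L^2_{\frac{1}{\sigma}}(0,1)$, $u_1\in H^{-1}_{\frac{1}{\sigma}}(0,1)$ then give $|L(V_T)|\le C\|V_T\|_{\mathcal H_0}$.

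The crux is then to invoke the Lax--Milgram Theorem for $\Lambda$ on $\mathcal H_0\times\mathcal H_0$. Continuity of $\Lambda$ follows from the upper energy bound \eqref{ineqetayx2} of Theorem \ref{thmuguaglianza}, which yields $\eta(1)\int_0^T v_x^2(t,1)\,dt\le (C_1+C_2T)E_v(0)$; coercivity is precisely the boundary observability inequality. To see the latter, I would perform the change of variable $y(t,x):=v(T-t,x)$, which turns \eqref{backward pblm} into \eqref{homogequation} and leaves both $\int_0^T v_x^2(t,1)\,dt$ and the energy unchanged (by Theorem \ref{thmref}), so that Corollary \ref{Observability0} or \ref{Observability01}, applicable for $T>T_0$ under Hypothesis \ref{hyp5}, gives $\Lambda(V_T,V_T)=\eta(1)\int_0^T v_x^2(t,1)\,dt\ge C_T\,E_v(0)$. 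Recalling from \eqref{energy} and the inner product of $\mathcal H_0$ in \eqref{Hilbert0} that $E_v(0)=\tfrac12\|V_T\|_{\mathcal H_0}^2$, this is exactly the coercivity asserted in Lemma \ref{cont-coercivity}.

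Finally, Lax--Milgram provides a unique $W_T\in\mathcal H_0$ with $\Lambda(W_T,V_T)=L(V_T)$ for all $V_T$; setting $f:=w_x(\cdot,1)$, which belongs to $L^2(0,T)$ again by \eqref{ineqetayx2}, the transposition identity \eqref{transposition} reduces to $\langle u_t(T),v_T^0\rangle-\int_0^1\frac{1}{\sigma}u(T)v_T^1\,dx=0$ for every $(v_T^0,v_T^1)\in\mathcal H_0$, whence $u(T)=u_t(T)=0$, i.e. \eqref{NC} holds. I expect the main obstacle to be the coercivity step, that is, the observability inequality; however this is already secured by the energy estimates of Section \ref{section3}, so within this proof the remaining effort is the careful bookkeeping of the transposition identity together with the continuity of $L$ and of $\Lambda$.
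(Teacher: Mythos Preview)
Your proposal is correct and follows essentially the same HUM strategy as the paper's proof: both define the linear functional $L(V_T)=\int_0^1\frac{u_0 v_t(0)}{\sigma}\,dx-\langle u_1,v(0)\rangle$, invoke Lax--Milgram for $\Lambda$ (continuity from \eqref{ineqetayx2}, coercivity from the observability Corollaries \ref{Observability0}--\ref{Observability01} via Lemma \ref{cont-coercivity}), set $f$ to be the normal trace of the resulting backward solution, and conclude from \eqref{transposition}. Your write-up is in fact slightly more explicit than the paper's on the continuity of $L$ and on the passage from observability to coercivity via $E_v(0)=E_v(T)=\tfrac12\|V_T\|_{\mathcal H_0}^2$, but the route is the same.
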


\section{Appendix}
For the readers' convenience, in this section we prove some results used throughout the previous sections.

\subsection{Proof of Lemma \ref{lemmalimits}}

1. We will actually prove an equivalent fact, usually appearing in integrations by parts, namely that
\[
\lim_{x \rightarrow 0} \eta (x)u(x) y'(x)=0.
\]
For this, consider the function
\[
z(x):=
\eta(x) u(x)  y'(x), \quad x \in (0,1].
\]
As in \cite{BFM2022} or in \cite{fm} one can prove that $z \in W^{1,1}(0,1)$. Indeed
\[
\int_0^1|z|dx \le C \|u\|_{L^2(0,1)}\|y'\|_{L^2(0,1)},
\]
thus $z \in L^1(0,1)$.
Moreover
\[
z'(x)= (\eta y')'u + \eta y'u'
\]
and
\[
\int_0^1\eta|y'u'|dx\le C\|y'\|_{L^2(0,1)}\|u'\|_{L^2(0,1)}.
\]
It remains to prove that $(\eta y')'u\in L^1(0,1)$. To this aim observe that
\[
(\eta y')'u dx= \sigma (\eta y')' \frac{u}{\sigma} dx.
\]
Thus, by H\"older's inequality and Proposition \ref{propL2}, 
\[
\int_0^1|(\eta y')'u| dx\le  \|Ay\|_{\frac{1}{\sigma}}\|u\|_{\frac{1}{\sigma}},
\]
for a positive constant $C$.
Thus $z'$ is summable on $[0,1]$.  Hence $z \in W^{1,1}(0,1) \hookrightarrow C[0,1]$ and there exists
\[
\lim_{x \rightarrow 0}z(x)=\lim_{x \rightarrow 0} \eta(x) u(x)  y'(x)=L \in \R.
\]
We will prove that $L=0$.  If $L \neq 0$ there would exist a  neighborhood $\cal I$ of $0$ such that
\[
\frac{|L|}{2} \le |\eta y'u|,
\]
for all $x \in \cal I$;
but, by  H\"older's inequality,
\[
|u(x)|\le \int_0^x |u'(t) |dt \le  \sqrt{x} \|u'\|_{L^2(0,1)}.
\]
Hence
\[
\frac{|L|}{2} \le |\eta y'u| \le \|\eta\|_{\infty} |y'| \sqrt{x} \|u'\|_{L^2(0,1)}
\]
for all $x \in \cal I$. This would imply that
\[
|y'| \ge \frac{|L|}{2 \|u'\|_{L^2(0,1)} \|\eta\|_{\infty} \sqrt{x} }
\]
in contrast to the fact that $ y' \in L^2(0,1)$.

Hence $L=0$ and the conclusion follows.

2. Using the fact that $u(0)=0$, we have
\[
|u(x)| \le \int_0^x |u'(t)|dt\le \sqrt{x} \|u'\|_{L^2(0,1)};
\]
thus
\[
\frac{x}{a(x)}u^2(x)\le \frac{x^2}{a(x)}\|u'\|_{L^2(0,1)}^2. 
\]
By \eqref{limitxgamma/a}, it follows the thesis.

3. As in the previous point, we know that
\[
|u(x)| \le \sqrt{x} \|u'\|_{L^2(0,1)};
\]
thus
\[
\frac{x}{a(x)d(x)}u^2(x)\le \frac{x^2}{a(x)d(x)}\|u'\|_{L^2(0,1)}^2. 
\]
By assumption we have that 
\[
\frac{1}{a(x)} \le \frac{1}{a(1)x^{K_1}} \quad \text{ and } \quad \frac{1}{d(x)} \le \frac{1}{d(1)x^{K_2}}.
\]
Hence
\[
\frac{x}{a(x)d(x)}u^2(x)\le\frac{1}{a(1)d(1)}\frac{x^2}{x^{K_1+ K_2}}\|u'\|_{L^2(0,1)}^2.
\]
Using the fact that $K_1+ K_2 < 2$, the claim follows immediately.

4.
If $K_1+K_2 <2$, then we can deduce the thesis by the previous point. If $K_1+K_2=2$, then proceeding as before, one has
\[
\frac{x^2}{a(x)d(x)}u^2(x)\le \frac{1}{a(1)d(1)}\frac{x^2}{x^{K_1+ K_2}} u^2(x)=  \frac{1}{a(1)d(1)}u^2(x)
\]
and the thesis follows immediately.

5. It is a straightforward consequence of the first point. Indeed, it is enough to choose any  function  $u\in H^1_{\frac{1}{\sigma}}(0,1)$ such that $u(x)=x$ in a right neighborhood of 0, and squaring.

6. 
 Consider the function $z:=x\eta(x) (u'(x))^2$. We have 
\[
z'=\eta(u')^2+x\eta' (u')^2 + 2x\eta u'u''=\eta(u')^2+2xu'(\eta u')'-x\eta'(u')^2.
\]
Since $K_a \le 1$, one has that $ x\eta'(u')^2 \in L^1(0,1)$. Moreover, $2xu'(\eta u')'$ belong to $L^1(0,1)$; indeed, as before,
\[
x(\eta u')'u' dx= x\sigma (\eta u')' \frac{u'}{\sigma} dx=Au\frac{xu'}{\sigma}.
\]
Clearly,  $\ds Au\frac{xu'}{\sigma}\in L^1(0,1)$;
 hence $z\in W^{1,1}(0,1)$ and $\lim_{x\to 0}z(x)=0.$

7. We can  proceed as above, observing that
\[
\left|x\eta'(u')^2\right|=\left|x\eta \frac{b}{a}(u')^2\right|\leq \left\| \frac{xb}{a}\right\|_{L^\infty(0,1)}\left\| \eta\right\|_{L^\infty(0,1)}(u')^2.
\]

\subsection{Proof of Theorem \ref{thmNC}}
The proof of Theorem \ref{thmNC} is based on the following properties of $\Lambda$.
\begin{lemma}\label{cont-coercivity}
Assume Hypothesis \ref{hyp5}. Then, the bilinear form $\Lambda$ is continuous and coercive.
\end{lemma}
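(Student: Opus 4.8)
The plan is to prove the two properties separately, since they rely on two distinct ingredients established earlier in the paper. For \emph{continuity}, I would start from the definition $\Lambda(V_T,W_T)=\eta(1)\int_0^T v_x(t,1)w_x(t,1)\,dt$ and apply the Cauchy--Schwarz inequality in $L^2(0,T)$ to bound it by $\eta(1)\|v_x(\cdot,1)\|_{L^2(0,T)}\|w_x(\cdot,1)\|_{L^2(0,T)}$. Then I would invoke the upper energy estimate \eqref{ineqetayx2} from Theorem \ref{thmuguaglianza}, which under Hypothesis \ref{hyp2bis} (hence under Hypothesis \ref{hyp5}) gives $\eta(1)\int_0^T v_x^2(t,1)\,dt\le (C_1+C_2T)E_v(0)$, and the analogous bound for $w$. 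Since the energy $E_v(0)$ is comparable to $\|V_T\|_{\mathcal H_0}^2$ by the definition of the inner product on $\mathcal H_0$ together with the conservation of energy in Theorem \ref{thmref} (note $E_v(t)=E_v(T)$, and $E_v(T)$ equals $\tfrac12\|V_T\|_{\mathcal H_0}^2$ up to the sign convention on the singular term), one obtains $|\Lambda(V_T,W_T)|\le C\|V_T\|_{\mathcal H_0}\|W_T\|_{\mathcal H_0}$, which is continuity.

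For \emph{coercivity}, the key is instead the lower energy estimate, namely the observability inequality. Setting $V_T=W_T$ gives $\Lambda(V_T,V_T)=\eta(1)\int_0^T v_x^2(t,1)\,dt$, and I would bound this \emph{from below} by invoking Corollary \ref{Observability0} or Corollary \ref{Observability01} (according to which branch of Hypothesis \ref{hyp5} is in force), each of which yields $\eta(1)\int_0^T v_x^2(t,1)\,dt\ge (C_4T-C_3)E_v(0)$ or $\ge(C_6T-C_5)E_v(0)$. Because Hypothesis \ref{hyp5} bundles in precisely the time conditions \eqref{cor0} or \eqref{cor01}, i.e. $T>T_0$, the relevant coefficient $C_4T-C_3$ (resp.\ $C_6T-C_5$) is strictly positive. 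Again using that $E_v(0)$ is equivalent to $\|V_T\|_{\mathcal H_0}^2$, this produces $\Lambda(V_T,V_T)\ge c\|V_T\|_{\mathcal H_0}^2$ with $c>0$, which is coercivity.

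The one point demanding care, and the step I expect to be the main obstacle, is the translation between the solutions $v$ of the backward problem \eqref{backward pblm} and the solutions $y$ of the forward problem \eqref{homogequation} to which Theorems \ref{thmuguaglianza}, \ref{stima1} and the Corollaries actually apply. As recorded in the excerpt, the change of variable $y(t,x)=v(T-t,x)$ turns \eqref{backward pblm} into \eqref{homogequation} with $y_T^0=v_T^0$ and $y_T^1=-v_T^1$, and one checks directly that $v_x(t,1)=y_x(T-t,1)$, so that $\int_0^T v_x^2(t,1)\,dt=\int_0^T y_x^2(t,1)\,dt$ and $E_v(t)=E_y(T-t)$; in particular $E_v(0)=E_y(T)=E_y(0)$ by conservation of energy. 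Thus the boundary-integral and energy quantities are genuinely invariant under this reflection, and the estimates stated for $y$ transfer verbatim to $v$. Once this identification is made explicit, the continuity and coercivity bounds follow immediately from the quoted results, and I would present the argument in this order: establish the energy--norm equivalence, record the time-reversal identities, then apply the upper estimate for continuity and the lower estimate for coercivity.
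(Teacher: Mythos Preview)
Your proposal is correct and follows essentially the same approach as the paper: Cauchy--Schwarz plus the upper estimate \eqref{ineqetayx2} for continuity, and the observability inequality (the paper cites Theorem~\ref{stima1} directly, you cite its Corollaries~\ref{Observability0}/\ref{Observability01}, which amounts to the same thing) for coercivity, with conservation of energy linking $E_v(0)$ to $\|V_T\|_{\mathcal H_0}^2$. If anything, your explicit treatment of the time-reversal $y(t,x)=v(T-t,x)$ is more careful than the paper's own proof, which applies the forward-problem estimates to the backward solution $v$ without spelling out this identification.
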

\begin{proof}
By Theorem \ref{thmref}, $E_v$ and $E_w$ are constant in time and thanks to \eqref{ineqetayx2}, one has that $\Lambda$ is continuous. Indeed, by Hölder's inequality and Corollary \ref{equivalenze} ,
$$
\begin{aligned}
\left|\Lambda\left(V_T, W_T\right)\right| & \leq \eta(1) \int_0^T\left|v_x(t, 1) w_x(t, 1)\right| d t \\
& \leq\left(\eta(1) \int_0^T v_x^2(t, 1) d t\right)^{\frac{1}{2}}\left(\eta(1) \int_0^T w_x^2(t, 1) d t\right)^{\frac{1}{2}} \\
& \leq C E_v^{\frac{1}{2}}(T) E_w^{\frac{1}{2}}(T) \\
& \le C\left(\int_0^1 \frac{\left(v_T^1\right)^2(x)}{\sigma} d x+\int_0^1 \eta v_x^2(T, x) d x - \int_0^1 \dfrac{\lambda}{d} v^2(T, x) d x  \right)^{\frac{1}{2}} \times \\
& \left(\int_0^1 \frac{\left(w_T^1\right)^2(x)}{\sigma} d x+\int_0^1 \eta w_x^2(T, x) d x - \int_0^1 \dfrac{\lambda}{d} w^2(T, x) d x \right)^{\frac{1}{2}} \\
& =C\left\|\left(v(T), v_t(T)\right)\right\|_{\mathcal{H}_0}\left\|\left(w(T), w_t(T)\right)\right\|_{\mathcal{H}_0}=C\left\|V_T\right\|_{\mathcal{H}_0}\left\|W_T\right\|_{\mathcal{H}_0}
\end{aligned}
$$
for a positive constant $C$ independent of $\left(V_T, W_T\right) \in \mathcal{H}_0 \times \mathcal{H}_0$.

In a similar way, one can prove that $\Lambda$ is coercive. Indeed, by Theorem \ref{stima1}, for all $V_T \in \mathcal{H}_0$, one immediately has
$$
\Lambda\left(V_T, V_T\right)=\int_0^T \eta(1) v_x^2(t, 1) d t \geq C_T E_v(0)=C_T E_v(T) \geq C\left\|V_T\right\|_{\mathcal{H}_0},
$$
for a positive constant $C$.
\end{proof}
\begin{proof}[Proof of Theorem $\ref{thmNC}$]
 Consider the continuous linear map $\mathcal{L}: \mathcal{H}_0 \rightarrow \mathbb{R}$ defined as
$$
\mathcal{L}\left(V_T\right):=\int_0^1 \frac{u_0 v_t(0, x)}{\sigma} d x-\left\langle u_1, v(0)\right\rangle_{H_{\frac{1}{\sigma}}^{-1}(0,1), H_{\frac{1}{\sigma}}^1(0,1)},
$$
where $v$ is the solution of \eqref{backward pblm} associated to the final data $V_T:=\left(v_T^0, v_T^1\right) \in \mathcal{H}_0$. Thanks to Lemma \ref{cont-coercivity} and by the Lax-Milgram Theorem, there exists a unique $\bar{V}_T \in$ $\mathcal{H}_0$ such that
\begin{equation}\label{L-Mpblm}
\Lambda\left(\bar{V}_T, W_T\right)=\mathcal{L}\left(W_T\right)\, \text{ for all } W_T \in \mathcal{H}_0
\end{equation}
Set $f(t):=\bar{v}_x(t, 1), v$ being the solution of \eqref{backward pblm} with initial data $\bar{V}_T$. Then, by \eqref{L-Mpblm}
\begin{equation}\label{equality1}
\begin{aligned}
\eta(1) \int_0^T f(t) w_x(t, 1) d t & =\eta(1) \int_0^T v_x(t, 1) w_x(t, 1) d t=\Lambda\left(V_T, W_T\right)=\mathcal{L}\left(W_T\right) \\
& =-\left\langle u_1, w(0)\right\rangle_{H_{\frac{1}{\sigma}}^{-1}(0,1), H_{\frac{1}{\sigma}}^1(0,1)}+\int_0^1 \frac{1}{\sigma} u_0 w_t(0, x) d x
\end{aligned}
\end{equation}
for all $W_T \in \mathcal{H}_0$.
Finally, denote by $u$ the solution by transposition of \eqref{mainequation}
\begin{equation}\label{equality2}
\begin{aligned}
\eta(1) \int_0^T f(t) w_x(t, 1) d t & =\left\langle u_t(T), w_T^0\right\rangle_{H_{\frac{1}{\sigma}}^{-1}(0,1), H_{\frac{1}{\sigma}}^1(0,1)}-\int_0^1 \frac{1}{\sigma} u(T) w_T^1 d x \\
& -\left\langle u_1, w(0)\right\rangle_{H_{\frac{1}{\sigma}}^{-1}(0,1), H_{\frac{1}{\sigma}}^1(0,1)}+\int_0^1 \frac{1}{\sigma} u_0 w_t(0, x) d x
\end{aligned}
\end{equation}
By \eqref{equality1} and \eqref{equality2}, it follows that
$$
\left\langle u_t(T), w_T^0\right\rangle_{H_{\frac{1}{\sigma}}^{-1}(0,1), H_{\frac{1}{\sigma}}^1(0,1)}-\int_0^1 \frac{1}{\sigma} u(T) w_T^1 d x=0
$$
for all $\left(w_T^0, w_T^1\right) \in \mathcal{H}_0$. Hence, we have
$$
u(T, x)=u_t(T, x)=0 \quad \text { for all } x \in(0,1) .
$$
\end{proof}

\section*{Statements and Declarations}
The authors declare no competing interests.

\section*{Acknowledgments.} G. F. and D. M. are members of {\it Gruppo Nazionale
 per l'Analisi Matematica, la Probabilità e le loro Applicazioni (GNAMPA)} of Istituto Nazionale di Alta Matematica (INdAM) ”Francesco Severi”. G.F. is a member of UMI group {\it Modellistica
Socio-Epidemiologica (MSE)}, she is supported by {\it FFABR Fondo per il finanziamento
delle attività base di ricerca 2017}, by the GNAMPA project 2023 {\it Modelli differenziali per l'evoluzione del clima e i suoi impatti} (CUP E53C22001930001),
by the GNAMPA project 2024 {\it ``Analysis, control and inverse problems for evolution
equations arising in climate science"} (CUP E53C23001670001) and by the PRIN 2022 PNRR P20225SP98 project.

 D. M. is
supported by the INdAM-GNAMPA Project 2023 {\it Variational and non-variational problems with lack of compactness}, 
CUP E53C22001930001, by the INdAM-GNAMPA Project 2024 {\it  Nonlinear
problems in local and nonlocal settings with applications}, CUP E53C23001670001, and by
{\it  FFABR Fondo per il finanziamento delle attività base di ricerca 2017}.

This work was started when A.S. was visiting the University of Tuscia. A.S. thanks the MAECI (Ministry of Foreign Affairs and International Cooperation, Italy) for funding that greatly facilitated scientific collaborations between Hassan First University of Settat
and University of Tuscia.

\end{document}